\documentclass[a4paper,11pt]{article}
\usepackage[utf8]{inputenc}
\usepackage{fullpage}
\usepackage{verbatim}
\usepackage{subcaption}
\usepackage{amsmath,amssymb,amsthm}
\usepackage{nicefrac}
\usepackage{bm}
\usepackage[sort&compress,numbers]{natbib}
\usepackage{notoccite}
\usepackage{xcolor}
\usepackage{graphicx}
\usepackage{tikz}
\usetikzlibrary{arrows.meta,shapes.geometric,fit,backgrounds}
\definecolor{my-blue}{rgb}{0.0,0.0,0.6}
\definecolor{my-red}{rgb}{0.5,0.0,0.0}
\definecolor{my-green}{rgb}{0.0,0.5,0.0}
\usepackage[
  colorlinks=true,
  linkcolor=my-red,
  citecolor=my-green,
  urlcolor=my-blue
]{hyperref}
\hypersetup{
  pdftitle={Likelihood-based inference for birth-death processes with composite birth mechanisms},
  pdfauthor={Marko Lalovic, Nicos Georgiou, Istvan Z. Kiss}
}

\newcommand{\convd}{\xrightarrow{\mathcal{D}}}
\newcommand{\convas}{\xrightarrow{a.s.}}

\numberwithin{equation}{section}
\theoremstyle{plain}
\newtheorem{theorem}{\sc Theorem}[section]
\newtheorem{lemma}[theorem]{\sc Lemma}
\newtheorem{proposition}[theorem]{\sc Proposition}
\newtheorem{corollary}[theorem]{\sc Corollary}
\newtheorem{assumption}[theorem]{\bf Assumption}

\theoremstyle{remark}
\newtheorem{remark}[theorem]{Remark}
\newtheorem{example}[theorem]{\bf Example}

\newcommand{\be}{\begin{equation}}
\newcommand{\ee}{\end{equation}}

\def\bE{\mathbb{E}}

\def\bP{\mathbb{P}}

\def\E{\bE}
\def\P{\bP}

\begin{document}
\title{Likelihood-based inference for birth-death processes with composite birth mechanisms}
\author{
Marko Lalovic,\textsuperscript{1}
Nicos Georgiou,\textsuperscript{2}
and Istv\'an Z. Kiss\textsuperscript{1,3}
\medskip\\
{\small 1. Network Science Institute, Northeastern University London, London, United Kingdom}\\
{\small 2. Department of Mathematics, University of Sussex, Brighton, United Kingdom}\\
{\small 3. Department of Mathematics, Northeastern University, Boston, MA, USA}
}
\date{}
\maketitle

\begin{abstract}
We develop a likelihood-based inference for finite-state birth-death processes with composite birth rates, in which multiple distinct mechanisms contribute additively to the total birth intensity. Our main motivating example is an SIS epidemic model with pairwise and higher-order transmission. The process is observed through a single aggregate trajectory, and in the main setting of interest, birth events are unmarked. This creates a deconvolution problem in event space: the state is one-dimensional, but the mechanism underlying each birth is latent.

We formulate the inference under a Doob $h$-transformed $Q$-process, which is time-homogeneous and ergodic and which provides a time-homogeneous asymptotic surrogate for the law of the original process conditioned on long survival. We derive the corresponding conditional likelihood and study both the conditional maximum likelihood estimator and a quasi-maximum likelihood estimator which is based on a simplified working score. 

Under the Doob-transform law, we prove consistency and asymptotic normality for both estimators, with asymptotic covariance determined by the inverse Fisher and inverse Godambe information matrices, respectively. We also showcase a practical one-dimensional test for the presence of a specific higher-order birth mechanism.
\end{abstract}

\medskip
\noindent\textbf{Keywords:} birth-death processes; likelihood inference; survival conditioning; higher-order epidemics; Doob transform; Q-process.

\smallskip
\noindent\textbf{MSC 2020:} Primary 62M05, 92D30; Secondary 60J27, 60J80, 62F12.

\section{Introduction}\label{sec:intro}
Understanding how contagion spreads through populations remains a central challenge in mathematical epidemiology. Classical models typically assume that transmission occurs through pairwise contacts between individuals. However, many real-world spreading phenomena, from infectious diseases to social behaviours, propagate through group interactions in which multiple 
individuals act collectively~\cite{centola2007complex, hodas2014simple}. This has motivated the study of higher-order interaction structures, such as hypergraphs and simplicial complexes, where transmission can occur not only through pairwise contacts but also through simultaneous group interactions involving three or more individuals~\cite{iacopini2019simplicial, battiston2020networks, torres2021and, Kiss2023Insights, bick2023higher}.

These higher-order interactions have been shown to qualitatively reshape the dynamics of spreading processes~\cite{iacopini2019simplicial, ferraz2024contagion, ferraz2023multistability}, as well as synchronisation 
phenomena~\cite{tanaka2011multistable, millan2020explosive, skardal2020higher} and evolutionary game theory~\cite{alvarez2021, civilini2021dilemmas, civilini2024explosive, guo2025evolutionary}. Indeed, higher-order mechanisms on hypergraphs allowing for multi-body interactions give rise to a range of phenomena absent from pairwise models~\cite{battiston2021physics, bianconi2021higher, rosas2022disentangling}, including explosive transitions~\cite{kuehn2021universal}, a vanishing critical mass~\cite{iacopini2022group}, multi-stability~\cite{ferraz2023multistability, 
skardal2023multistability, zhang2024deeper}, and chaos~\cite{sun2023dynamic}. Crucially, the way group interactions are distributed across the population determines much of this behaviour~\cite{st2021universal, majhi2022dynamics, 
mancastroppa2023hyper, nandy2024degree, kim2024higher}: from a macroscopic viewpoint, the presence of hubs in higher-order structures significantly influences the onset and evolution of spreading processes~\cite{landry2020effect, st2022influential}.

Despite this rapid progress in model development, important theoretical and empirical gaps remain. Rigorous understanding of how mean-field models relate to their exact stochastic counterparts on hypergraphs is still lacking, as is 
methodology for validating these models against controlled or empirical data and for assessing whether higher-order interactions are identifiable from observations. A recent review of contagion dynamics on higher-order networks~\cite{ferraz2024contagion} emphasises that, despite clear opportunities for further theoretical advances, empirical validation through data or experiments remains scarce and the field lacks a well-defined roadmap.

This article addresses one such fundamental open question: can distinct transmission mechanisms be identified from a single observed trajectory of the aggregate process? Specifically, if we observe only the aggregate number of infections over time, without knowing which mechanism triggered each infection event, can we disentangle the contributions of pairwise spread from those of higher-order group transmission? While we frame this question in the context of higher-order contagion, the underlying inferential problem is more general and arises whenever transition rates decompose into multiple mechanisms, as in chemical reaction networks~\cite{gillespie1977exact, erban2020stochastic, mcquarrie1967stochastic}, population processes~\cite{kendall1948birth}, and models of social or information diffusion~\cite{masuda2021gillespie}.

In this article, we develop a rigorous likelihood-based framework for addressing this question. Our motivating example is the higher-order SIS epidemic on a symmetric hypergraph, whose infected-count process can be mapped to a birth-death chain. Accordingly, we consider birth-death processes on a finite state space $\{0, 1, \ldots, N\}$, where the birth rate from state $k$ takes the composite form $\lambda_k(\bm\beta) = \sum_{i=1}^{K} \beta_i f_i(k)$, with known structural functions $f_i$ and unknown intensity parameters $\beta_i > 0$. Each term $\beta_i f_i(k)$ corresponds to a distinct transmission mechanism, and the inferential challenge is to estimate the individual $\beta_i$ from observations of the aggregate state process $(X_t)_{t \geq 0}$ alone. We call this problem \emph{deconvolution in the event space}: the state trajectory is one-dimensional, but its increments arise from a mixture of latent event types. Working with a fully connected population ensures that only the total number of infected individuals matters, making the birth-death framework a natural choice.

We also assume continuous observation throughout. The birth-death process has a long history as a modelling framework, with well-established inferential theory for classical linear cases~\cite{Keiding1975, Crawford2014}, and parameter estimation under survival conditioning has been studied for branching processes~\cite{braunsteins2022parameter, braunsteins2025, hautphenne2025consistentestimationsubcriticalbirthanddeath} and epidemic models~\cite{britton2019stochastic}, though typically under the assumption of a single birth mechanism. The composite rate structure we consider arises naturally in models of contagion on higher-order networks~\cite{iacopini2019simplicial, Kiss2023Insights}, where existing work has relied on mean-field approximations or simulation-based fitting rather than formal likelihood inference.

A fundamental complication arises from the presence of an absorbing extinction state at $k = 0$. In many applications, particularly epidemiology, we observe only trajectories that have not gone extinct: outbreaks that persisted 
long enough to be detected and monitored. Naively applying standard likelihood methods to such data introduces selection bias, since observed trajectories are not representative of the full process. Moreover, the size of the bias depends on the epidemic parameters, which are not a priori available. This phenomenon is well understood in population biology, where the vast majority of species that have ever existed are now extinct~\cite{raup1993extinction}, yet we overwhelmingly study the 
survivors. We handle this complication through the Doob $h$-transform, which constructs an ergodic process from the transient dynamics of the original absorbing chain and provides an appropriate inferential measure for asymptotic analysis~\cite{DarrochSeneta1967, ColletMartinez2013, ChampagnatVillemonais2016}.

A second complication is that, in practice, one typically observes only the aggregate state of the system, the total number of infected individuals, without knowing which transmission mechanism triggered each infection event. Disentangling pairwise from higher-order contributions via a single observed signal is the central statistical challenge of this paper. 

A natural downstream question is whether higher-order transmission is statistically detectable at all. For a fixed birth parameter $\beta_i$, one may test
\[
H_0:\beta_i=0
\qquad \text{versus} \qquad
H_1:\beta_i>0,
\]
thereby asking whether mechanism $i$ is present or not. In the SIS example, the case $i=2$ corresponds to testing for triadic transmission. We focus on this one-dimensional testing problem, which already captures the question of whether higher-order transmission can be detected from one-dimensional count data alone.

The contribution of the paper is twofold. First, we develop a conditional likelihood framework based on the $Q$-process and establish asymptotic normality for both the conditional MLE and the QMLE. Second, we provide a practical one-dimensional test for the presence of higher-order transmission. More broadly, the paper provides a rigorous likelihood-based approach to event-space deconvolution in finite-state birth-death models with composite birth rates.

The remainder of the paper is organised as follows. Section~\ref{sec:model} introduces the model, the SIS motivating example, and the main results. Section~\ref{sec:uncond-lik} derives the unconditional likelihood and naive maximum likelihood estimator which is biased under the survival selection. Section~\ref{sec:survival} develops survival conditioning via the Doob $h$-transform and the $Q$-process. Section~\ref{sec:conditional-lik} derives the conditional likelihood, score functions, and the conditional MLE and QMLE. Section~\ref{sec:asymptotic} presents the asymptotic results, including testing for the presence of a specific birth mechanism. We conclude with a discussion in Section~\ref{sec:discussion}.

\section{Model and main results}\label{sec:model}
We consider a class of continuous-time birth-death processes with multiple birth mechanisms but a one-dimensional observed state. The process $(X_t)_{t \geq 0}$ takes values in $\{0,1,\dots,N\}$, where $X_t$ denotes the population size at time $t$. Upward jumps may arise through several distinct mechanisms, while downward jumps correspond to removals or recoveries.

Our inferential setting is therefore a deconvolution problem in event space, not in state space: we observe the aggregate process $(X_t)$, but we do not observe which birth mechanism generated a given upward jump. The population itself is homogeneous, what is hidden is the type of event producing the jump.

We first define the model and observation scheme, then state the main inferential conclusions. In particular, naive inference based on surviving trajectories is biased due to survival selection, whereas likelihood-based estimators constructed under the $Q$-process law are consistent and asymptotically normal. This asymptotic theory also provides a practical one-dimensional test for the presence of a specific higher-order birth mechanism.

\subsection{Running example: SIS epidemic with higher-order transmission}
Our main motivating example is the simplicial susceptible-infected-susceptible (SIS) epidemic model introduced in \cite{iacopini2019simplicial}. We consider a population of $N$ individuals, each of whom is either susceptible or infected. The process $X_t$ tracks the number of infected individuals at time $t$.

In addition to standard pairwise transmission, we allow higher-order infection mechanisms. Fix $K \ge 2$. For each $i \in \{1,\dots,K\}$, every unordered $i$-tuple of infected individuals exerts infection pressure on each susceptible at rate $\beta_i > 0$. On the complete hypergraph containing all simplices up to order $K$, when $X_t=k$ there are $\binom{k}{i}$ infected $i$-tuples and $N-k$ susceptible individuals. Hence the total birth rate takes the form
\begin{equation}\label{eq:sis-birth-rate}
\lambda_k(\bm\beta) =
\sum_{i=1}^K \beta_i \binom{k}{i}(N-k),
\end{equation}
so that
\[
f_i(k)=\binom{k}{i}(N-k), \qquad i=1,\dots,K.
\]

Recoveries occur independently at per-capita rate $\mu>0$, so the total death rate is
\begin{equation}\label{eq:sis-death-rate}
\mu_k(\mu)=\mu k.
\end{equation}

Thus the simplicial SIS model is a composite birth-death process with multiple birth mechanisms, while the observed state remains one-dimensional through the infected count $X_t$. Figure~\ref{fig:SCM_model} illustrates the infection mechanisms.

\begin{figure}[tbp]
\centering
\begin{tikzpicture}[
    x=1cm, y=1cm,
    font=\small,
    infected/.style={
        circle,
        draw=black,
        fill=black,
        minimum size=7pt,
        inner sep=0pt
    },
    susceptible/.style={
        circle,
        draw=black,
        fill=white,
        line width=0.5pt,
        minimum size=7pt,
        inner sep=0pt
    },
    mechanismarrow/.style={
        -{Stealth[length=5pt,width=5pt]},
        line width=0.9pt
    },
    simplexedge/.style={
        draw=black,
        line width=0.6pt
    },
    simplexface/.style={
        fill=black!8,
        draw=none
    },
    localhalo/.style={
        fill=black!10,
        draw=none
    },
    figlabel/.style={
        font=\footnotesize,
        align=center
    },
    legendlabel/.style={
        font=\footnotesize,
        anchor=west
    }
]

\begin{scope}[shift={(0,0)}]
    \fill[localhalo] (0.5,0) ellipse (0.7 and 0.30);
    \node[infected] at (0,0) {};
    \node[susceptible] at (1,0) {};

    \draw[mechanismarrow] (1.45,0) -- (1.9,0);

    \fill[localhalo] (2.9,0) ellipse (0.7 and 0.30);
    \node[infected] at (2.4,0) {};
    \node[infected] at (3.4,0) {};

    \node[figlabel] at (1.7,-0.62) {Type-1 birth};
\end{scope}

\begin{scope}[shift={(5.4,0)}]
    \fill[localhalo] plot[smooth cycle, tension=0.7] coordinates {
        (-0.22,-0.18) (0.5,-0.25) (1.22,-0.18)
        (1.0,0.42) (0.5,1.02) (0.0,0.42)
    };
    \node[infected] at (0,0) {};
    \node[infected] at (1,0) {};
    \node[susceptible] at (0.5,0.8) {};

    \draw[mechanismarrow] (1.50,0.32) -- (1.95,0.32);

    \fill[localhalo] plot[smooth cycle, tension=0.7] coordinates {
        (2.18,-0.18) (2.9,-0.25) (3.62,-0.18)
        (3.4,0.42) (2.9,1.02) (2.4,0.42)
    };
    \node[infected] at (2.4,0) {};
    \node[infected] at (3.4,0) {};
    \node[infected] at (2.9,0.8) {};

    \node[figlabel] at (1.7,-0.62) {Type-2 birth};
\end{scope}

\begin{scope}[shift={(10.8,0)}]
    \fill[localhalo] plot[smooth cycle, tension=0.7] coordinates {
        (-0.28,-0.22) (0.55,-0.30) (1.38,-0.22)
        (1.15,0.50) (0.55,1.20) (-0.05,0.50)
    };
    \node[infected] at (0,0) {};
    \node[infected] at (1.10,0) {};
    \node[infected] at (0.55,0.92) {};
    \node[susceptible] at (0.55,0.32) {};

    \draw[mechanismarrow] (1.65,0.38) -- (2.10,0.38);

    \fill[localhalo] plot[smooth cycle, tension=0.7] coordinates {
        (2.32,-0.22) (3.15,-0.30) (3.98,-0.22)
        (3.75,0.50) (3.15,1.20) (2.55,0.50)
    };
    \node[infected] at (2.60,0) {};
    \node[infected] at (3.70,0) {};
    \node[infected] at (3.15,0.92) {};
    \node[infected] at (3.15,0.32) {};

    \node[figlabel] at (1.85,-0.62) {Type-3 birth};
\end{scope}

\begin{scope}[shift={(0.2,-2.9)}]
    \draw[simplexedge] (0,0) -- (1.4,0);
    \node[infected] at (0,0) {};
    \node[susceptible] at (1.4,0) {};
    \node[figlabel] at (0.7,-0.48) {$\beta_1$};
\end{scope}

\begin{scope}[shift={(4.0,-2.9)}]
    \coordinate (A) at (0,0);
    \coordinate (B) at (1.4,0);
    \coordinate (C) at (0.7,1.2);

    \fill[simplexface] (A) -- (B) -- (C) -- cycle;
    \draw[simplexedge] (A) -- (B) -- (C) -- cycle;

    \node[infected] at (A) {};
    \node[infected] at (B) {};
    \node[susceptible] at (C) {};

    \node[figlabel] at (0.7,-0.48) {$2\beta_1 + \beta_2$};
\end{scope}

\begin{scope}[shift={(7.9,-2.9)}]
    \coordinate (A) at (0,0);
    \coordinate (B) at (1.8,0);
    \coordinate (C) at (0.9,1.56);
    \coordinate (O) at (0.9,0.52);

    \fill[simplexface] (O) -- (A) -- (B) -- cycle;
    \fill[simplexface] (O) -- (B) -- (C) -- cycle;
    \fill[simplexface] (O) -- (C) -- (A) -- cycle;

    \draw[simplexedge] (A) -- (B) -- (C) -- cycle;
    \draw[simplexedge] (O) -- (A);
    \draw[simplexedge] (O) -- (B);
    \draw[simplexedge] (O) -- (C);

    \node[infected] at (A) {};
    \node[infected] at (B) {};
    \node[infected] at (C) {};
    \node[susceptible] at (O) {};

    \node[figlabel] at (0.9,-0.48) {$3\beta_1 + 3\beta_2 + \beta_3$};
\end{scope}

\begin{scope}[shift={(11.8,-3.1)}]
    \node[infected] at (0,0.30) {};
    \node[legendlabel] at (0.22,0.30) {infected};

    \node[susceptible] at (0,-0.15) {};
    \node[legendlabel] at (0.22,-0.15) {susceptible};
\end{scope}

\end{tikzpicture}
\caption{Higher-order infection mechanisms in the simplicial SIS model with $K=3$. A susceptible vertex (white) may become infected through pairwise interaction with one infected neighbour (rate $\beta_1$), through a triadic interaction involving two infected neighbours (rate $\beta_2$), or through a third-order interaction involving three infected neighbours (rate $\beta_3$). The total infection pressure is obtained by summing over all such local
configurations.}
\label{fig:SCM_model}
\end{figure}
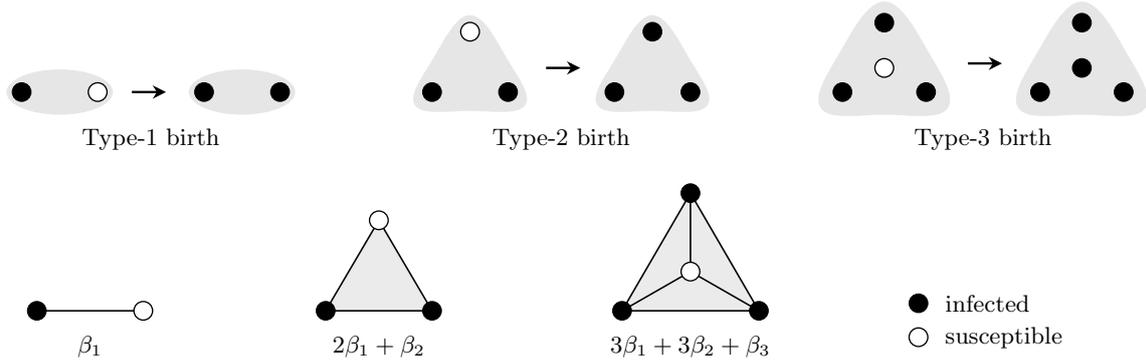

\subsection{Composite birth-death model}\label{subsec:bdp-def}
As noted above, due to the symmetry of the hypergraph, the number of infected in higher-order SIS epidemics is distributionally equal to a birth-death chain with multiple birth mechanisms and a homogeneous population. Therefore we will develop the inference based on this description.

We consider a continuous-time birth-death process $(X_t)_{t\ge0}$ on the finite state space
\[
\mathcal S=\{0,1,\dots,N\},
\]
where $X_t$ denotes the population size at time $t$. The state $0$ is absorbing and represents extinction.

For $k\in\{1,\dots,N-1\}$, the total birth rate is assumed to have the additive form
\be\label{eq:birth-rate}
\lambda_k(\bm\beta) :=
\sum_{i=1}^K \beta_i f_i(k) =
\bm\beta^\top \bm f(k),
\ee
where $K\ge2$ is the number of birth mechanisms,
\be
\bm\beta=(\beta_1,\dots,\beta_K)^\top,
\quad \text{ and } \quad
\bm f(k)=(f_1(k),\dots,f_K(k))^\top,
\ee
Here $\beta_1, \dots, \beta_K$ are unknown parameters, and the functions $f_i:\mathcal S\to[0,\infty)$ are assumed known structural functions determined by the model. We assume $f_i(0)=f_i(N)=0$ for all $i$, so that no births occur from the extinction state or when the system is at full capacity.

The total death rate from state $k$ to state $k-1$ is
\begin{equation}\label{eq:death-rate}
\mu_k(\mu) := \mu\,r(k),
\end{equation}
where $\mu>0$ is an unknown parameter and $r:\mathcal S\to[0,\infty)$ is another known structural function satisfying
$r(0)=0$ and $r(k)>0$ for all $k\ge1$.

The full parameter vector is
\[
\bm\theta=(\beta_1,\dots,\beta_K,\mu)^\top \in \Theta.
\]

For the main estimation problem we work on an open parameter set
\[
\Theta\subset(0,\infty)^{K+1},
\]
so that all mechanism intensities and the death parameter are strictly positive. We assume that the true parameter $\bm\theta_0=(\beta_{1,0},\dots,\beta_{K,0},\mu_0)^\top$ lies in the interior of $\Theta$. For the one-sided testing problem in Section~\ref{sec:testing}, we enlarge this to an open admissible set $\Theta_{\mathrm{test}}$ in which the coordinate under test may take either sign, provided the resulting birth intensities remain strictly positive.

\subsection{Unmarked births and event-space deconvolution}
We assume that the sample path $(X_t)_{0\le t\le T}$ is observed continuously on a finite time interval $[0,T]$. Each upward jump of $X_t$ corresponds to a birth event, and each downward jump corresponds to a death event. Thus the observed trajectory determines the jump times, waiting times, and jump directions. Figure~\ref{fig:BDP_sample_path} illustrates a sample path of such a process.

The key feature of the model is that births are \emph{unmarked}, when an upward jump occurs, we observe that a birth has taken place, but we do not observe which of the $K$ birth mechanisms generated it. By contrast, deaths are not decomposed into multiple types in the present formulation.

The process $(X_t)$ itself is one-dimensional and the population is homogeneous. The latent mechanism responsible for each birth event remains hidden. The central question is whether the parameters governing these distinct birth mechanisms can be recovered from continuous observation of the aggregate path alone.

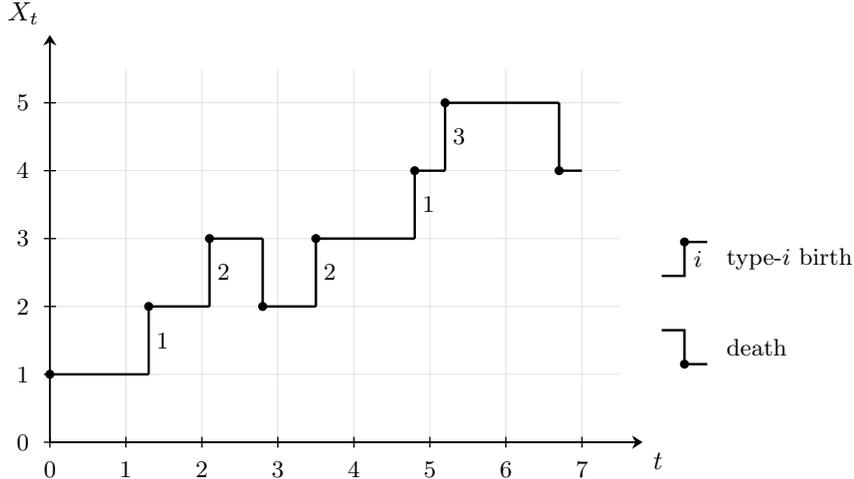
\begin{figure}[tbp]
\centering
\begin{tikzpicture}[
    x=1cm, y=0.9cm,
    font=\small,
    axis/.style={
        -{Stealth[length=4pt,width=5pt]},
        line width=0.7pt
    },
    gridline/.style={
        draw=black!12,
        line width=0.3pt
    },
    samplepath/.style={
        draw=black,
        line width=0.9pt
    },
    ticklabel/.style={
        font=\footnotesize
    },
    birthlabel/.style={
        font=\footnotesize,
        inner sep=1pt,
        fill=white,
        fill opacity=0.9,
        text opacity=1
    },
    legendtext/.style={
        font=\footnotesize,
        anchor=west
    }
]

\foreach \y in {1,2,3,4,5} {
    \draw[gridline] (0,\y) -- (7.5,\y);
}
\foreach \x in {1,2,3,4,5,6,7} {
    \draw[gridline] (\x,0) -- (\x,5.5);
}

\draw[axis] (0,0) -- (7.8,0) node[below right] {$t$};
\draw[axis] (0,0) -- (0,6.0) node[above left] {$X_t$};

\foreach \x in {0,1,2,3,4,5,6,7} {
    \draw[line width=0.5pt] (\x,0.08) -- (\x,-0.08);
    \node[ticklabel, below] at (\x,-0.14) {\x};
}

\foreach \y in {0,1,2,3,4,5} {
    \draw[line width=0.5pt] (-0.08,\y) -- (0.08,\y);
    \node[ticklabel, left] at (-0.14,\y) {\y};
}

\coordinate (p0)  at (0,1);
\coordinate (p1)  at (1.3,1);
\coordinate (p2)  at (1.3,2);
\coordinate (p3)  at (2.1,2);
\coordinate (p4)  at (2.1,3);
\coordinate (p5)  at (2.8,3);
\coordinate (p6)  at (2.8,2);
\coordinate (p7)  at (3.5,2);
\coordinate (p8)  at (3.5,3);
\coordinate (p9)  at (4.8,3);
\coordinate (p10) at (4.8,4);
\coordinate (p11) at (5.2,4);
\coordinate (p12) at (5.2,5);
\coordinate (p13) at (6.7,5);
\coordinate (p14) at (6.7,4);
\coordinate (p15) at (7.0,4);

\draw[samplepath]
    (p0) -- (p1)
    (p2) -- (p3)
    (p4) -- (p5)
    (p6) -- (p7)
    (p8) -- (p9)
    (p10) -- (p11)
    (p12) -- (p13)
    (p14) -- (p15);

\draw[samplepath]
    (p1) -- (p2)
    (p3) -- (p4)
    (p5) -- (p6)
    (p7) -- (p8)
    (p9) -- (p10)
    (p11) -- (p12)
    (p13) -- (p14);

\fill (p0)  circle (1.7pt);
\fill (p2)  circle (1.7pt);
\fill (p4)  circle (1.7pt);
\fill (p6)  circle (1.7pt);
\fill (p8)  circle (1.7pt);
\fill (p10) circle (1.7pt);
\fill (p12) circle (1.7pt);
\fill (p14) circle (1.7pt);

\node[birthlabel, right] at (1.36,1.50) {$1$};
\node[birthlabel, right] at (2.16,2.50) {$2$};
\node[birthlabel, right] at (3.56,2.50) {$2$};
\node[birthlabel, right] at (4.86,3.50) {$1$};
\node[birthlabel, right] at (5.26,4.50) {$3$};

\begin{scope}[shift={(8.35,2.45)}]
    \draw[samplepath] (-0.30,0.00) -- (0.00,0.00) -- (0.00,0.50) -- (0.30,0.50);
    \fill (0.00,0.50) circle (1.7pt);
    \node[birthlabel, right] at (0.08,0.25) {$i$};
    \node[legendtext] at (0.42,0.25) {type-$i$ birth};

    \draw[samplepath] (-0.30,-0.80) -- (0.00,-0.80) -- (0.00,-1.30) -- (0.30,-1.30);
    \fill (0.00,-1.30) circle (1.7pt);
    \node[legendtext] at (0.42,-1.05) {death};
\end{scope}

\end{tikzpicture}
\caption{Illustration of a sample path of the composite birth-death process on $[0,7]$. Upward jumps correspond to births and downward jumps to deaths. The birth-mechanism labels shown beside upward jumps are latent and are included only for illustration. In the unmarked births setting studied in this paper, these labels are not observed. Filled circles indicate right-continuity.}
\label{fig:BDP_sample_path}
\end{figure}

\subsection{Main results}\label{subsec:main-results}
A central difficulty in epidemic applications is that the state 0 is absorbing. In practice, one typically observes outbreaks that survive long enough to be detected and monitored, whereas trajectories that become extinct quickly, often leave no usable record. As a result, treating an observed long trajectory as if it were sampled from the unconditional law $\mathbb P_{\bm\theta}$ leads to survival-selection bias. We therefore aim to carry out inference for trajectories observed under the survival-conditioned law
\be\label{eq:cond-measure}
\mathbb P_{\bm\theta}(\,\cdot\,\mid \tau_0>T),
\qquad
\tau_0:=\inf\{t>0:X_t=0\},
\ee
where $T$ denotes the observation horizon. Since this law is not time-homogeneous, we replace it by the associated Doob $h$-transformed $Q$-process, which provides a time-homogeneous asymptotic surrogate for the survival-conditioned measure; see Section~\ref{sec:survival} and Appendix~\ref{app:QvsConditional}. We denote the measure under the $Q$-process by $\widetilde \P_{\bm \theta_0}$.

We consider three estimators. The naive unconditional maximum likelihood estimator, denoted by $\hat{\bm\theta}_T$, is obtained from the likelihood under the original process and ignores survival selection. The conditional maximum likelihood estimator $\widetilde{\bm\theta}_T$ is defined from the likelihood under the $Q$-process, while the quasi-maximum likelihood estimator $\bar{\bm\theta}_T$ is based on a simplified working score obtained by omitting derivatives of the tilting factors arising in the Doob $h$-transform.

Our first message is that naive inference is biased when based on surviving trajectories. Our main theoretical results then show that, under the $Q$-process law, both $\widetilde{\bm\theta}_T$ and $\bar{\bm\theta}_T$ are asymptotically well behaved. The estimator $\widetilde{\bm\theta}_T$ is asymptotically normal with covariance given by the inverse Fisher information, whereas $\bar{\bm\theta}_T$ is consistent and asymptotically normal with covariance given by the inverse Godambe information.

The estimator definitions and score equations are given in Section~\ref{sec:conditional-lik}, while the full asymptotic statements appear in Section~\ref{sec:asymptotic}. 

The main limit theorems underpinning statistical tests based on a single observed trajectory are as follows.
\begin{enumerate}
\item Consistency and asymptotic normality for the MLE
\[
\widetilde {\bm\theta}_T \stackrel{P}{\longrightarrow} \bm\theta_0,
\qquad
\sqrt{T}\bigl(\widetilde{\bm\theta}_T-\bm\theta_0\bigr)
\convd
\mathcal N\!\bigl(0,\mathcal I(\bm\theta_0)^{-1}\bigr)
\quad
\text{under}
\quad 
\widetilde{\mathbb P}_{\bm\theta_0},
\]

    \item Consistency and asymptotic normality for the QMLE 
\[
\bar{\bm\theta}_T \stackrel{P}{\longrightarrow} \bm\theta_0,
\qquad
\sqrt{T}\bigl(\bar{\bm\theta}_T-\bm\theta_0\bigr)
\convd
\mathcal N\!\bigl(0,\mathcal G(\bm\theta_0)^{-1}\bigr)
\quad
\text{under}
\quad
\widetilde{\mathbb P}_{\bm\theta_0},
\]
\end{enumerate}
For the statements above, the Fisher information matrix $\mathcal I$ is defined in Section~\ref{sec:info_matrices}, and the Godambe information matrix $\mathcal G$ is defined there as well. The symbol $\mathcal N(\bm \mu, \bm \Sigma)$ denotes a multivariate normal distribution with mean vector $\bm \mu$ and covariance matrix $\bm \Sigma$. For the precise statements of the results and the necessary assumptions to make the result rigorous, see Theorems \ref{thm:clt-mle} and \ref{thm:clt-qmle} whilst their proofs can be found in Appendix \ref{sec:proofs}.

Our main application is a one-dimensional test for the presence of a specific birth mechanism. For a fixed $i\in\{1,\dots,K\}$, we consider
\[
H_0:\beta_i=0
\qquad\text{versus}\qquad
H_1:\beta_i>0.
\]

This results in a one-sided Wald test for the presence of mechanism $i$ based on an unconstrained estimator. In particular, in the simplicial SIS model, the case $i=2$ corresponds to testing for the presence of triadic transmission. The associated one-dimensional test statistic and its calibration are developed at the end of Section~\ref{sec:asymptotic}.

\section{Counting processes and unconditional likelihood}\label{sec:uncond-lik}
We now derive the likelihood under the original, unconditional law of the composite birth-death process. This yields the naive maximum likelihood estimator, which treats the observed trajectory as if it were sampled from
$\mathbb P_{\bm\theta}$. We then explain why this is inappropriate in epidemic applications, where typically inference is done from a single trajectory only after it has survived for a substantial period of time.

\paragraph{Counting-process representation.}
Let $N_t^{(+)}$ and $N_t^{(-)}$ denote the cumulative numbers of births and deaths on $[0,t]$, respectively. Then
\begin{equation}\label{eq:state-counting}
X_t = X_0 + N_t^{(+)} - N_t^{(-)}, \qquad t\ge0.
\end{equation}
Under $\mathbb P_{\bm\theta}$, these counting processes admit the random time-change representation
\begin{equation}\label{eq:random-time}
N_t^{(+)} =
Y^{+}\!\left(\int_0^t \lambda_{X_s}(\bm\beta)\,ds\right),
\qquad
N_t^{(-)} =
Y^{-}\!\left(\int_0^t \mu_{X_s}(\mu)\,ds\right),
\end{equation}
where $Y^{+}$ and $Y^{-}$ are independent unit-rate Poisson processes \cite{ABGK1993}. Equivalently,
\[
X_t = X_0 +
Y^{+}\!\left(\int_0^t \lambda_{X_s}(\bm\beta)\,ds\right) -
Y^{-}\!\left(\int_0^t \mu_{X_s}(\mu)\,ds\right).
\]
This provides a constructive definition of the process and underpins exact
simulation via the Gillespie algorithm.

\paragraph{Unconditional likelihood.}
We assume that the trajectory $(X_t)_{0\le t\le T}$ is observed continuously on a finite interval $[0,T]$. Since births are unmarked, the observed path records the jump times and whether each jump is upward or downward, but not which birth mechanism generated a given upward jump.

The log-likelihood of the observed path under the unconditional law $\mathbb P_{\bm\theta}$ is
\begin{equation}\label{eq:uncond-loglik}
\ell_T(\bm\theta) =
\int_0^T \log \lambda_{X_{t-}}(\bm\beta)\, dN_t^{(+)} +
\int_0^T \log \mu_{X_{t-}}(\mu)\, dN_t^{(-)} -
\int_0^T \bigl[\lambda_{X_t}(\bm\beta)+\mu_{X_t}(\mu)\bigr]\,dt.
\end{equation}
where $X_{t-}$ denotes the left limit of the process at time $t$, that is, the state immediately before a possible jump at time $t$. The first term collects the contributions of upward jumps, the second the contributions of downward jumps, and the final integral is the compensator corresponding to the total jump intensity.

\begin{example}[Marked births]
The marked-birth setting is simpler, and it is used as a comparison case in this paper, since the theory works in that case as well. Consider a finite time horizon $T$. If the mechanism type of each birth event is observed, and $N^{(+)}_{i,t}$ denotes the number of type-$i$ births in $[0,t]$, then the birth term in \eqref{eq:uncond-loglik} decomposes as
\[
\sum_{i=1}^K \int_0^T \log\!\bigl(\beta_i f_i(X_{t-})\bigr)\,dN^{(+)}_{i,t}.
\]
For $i=1,\dots,K$ and $k=1,\dots,N-1$, define the statewise type-$i$ birth counts
\[
N_{i,k}^{(+)}(T):=\int_0^T \mathbf 1_{\{X_{t-}=k\}}\,dN_{i,t}^{(+)},
\]
the statewise death counts
\[
N_k^{(-)}(T):=\int_0^T \mathbf 1_{\{X_{t-}=k\}}\,dN_t^{(-)},
\qquad k=1,\dots,N,
\]
and the occupation times of state $k$, 
\[
T_k(T):=\int_0^T \mathbf 1_{\{X_t=k\}}\,dt,
\qquad k=0,\dots,N.
\]
Then, up to an additive constant independent of $\bm\theta$, the unconditional
log-likelihood becomes
\begin{align}
\ell_T^{\mathrm{mark}}(\bm\theta) &=
\sum_{i=1}^K\sum_{k=1}^{N-1}
\Big[
N_{i,k}^{(+)}(T)\log\!\big(\beta_i f_i(k)\big)
-\beta_i f_i(k)\,T_k(T)
\Big]
\label{eq:marked-uncond-loglik} 
\\
&\phantom{xxxxxxxxx}
+ \sum_{k=1}^{N}
\Big[
N_k^{(-)}(T)\log\!\big(\mu r(k)\big)
-\mu r(k)\,T_k(T)
\Big].
\nonumber
\end{align}
Hence the coordinates separate completely and, whenever the denominators are
positive, the naive unconditional MLE is available in closed form:
\[
\hat\beta_{i,T}^{\mathrm{mark}} =
\frac{\sum_{k=1}^{N-1} N_{i,k}^{(+)}(T)}
{\sum_{k=1}^{N-1} f_i(k)\,T_k(T)},
\quad i=1,\dots,K,
\text{ and } \quad
\hat\mu_T^{\mathrm{mark}} =
\frac{\sum_{k=1}^{N} N_k^{(-)}(T)}
{\sum_{k=1}^{N} r(k)\,T_k(T)}.
\]
Thus, once the birth marks are observed, each birth mechanism is estimated by its own event count divided by its integrated state-dependent exposure. The event-space deconvolution problem disappears at the level of the unconditional likelihood. \qed
\end{example}

\paragraph{Naive maximum likelihood estimator.}
The naive maximum likelihood estimator is defined by
\begin{equation}\label{eq:naive-mle}
\hat{\bm\theta}_T \in \arg\max_{\bm\theta\in\Theta} \ell_T(\bm\theta).
\end{equation}
When the maximiser lies in the interior of $\Theta$, it satisfies the score equation
\begin{equation}\label{eq:naive-score}
\nabla \ell_T(\bm\theta)=0.
\end{equation}

The estimator $\hat{\bm\theta}_T$ is the natural likelihood-based estimator under the original model. However, it is appropriate only if the observed path may genuinely be regarded as a typical realisation from the unconditional law
$\mathbb P_{\bm\theta}$.

\paragraph{Survival selection and bias.}
In epidemic applications, the observed trajectory is typically not a sample path from the unconditional law. Outbreaks that become extinct very quickly may never be detected, whereas trajectories that survive for long enough are much more likely to be observed and recorded. The available data are therefore subject to survival selection.

\begin{example}
Figure~\ref{fig:single_trajectory_survival} shows a simulated trajectory of the simplicial SIS model that survives over the full window $[0,T]$. In practice, trajectories of this kind are much more likely to be recorded than trajectories that go extinct shortly after initiation. The inferential sample is therefore selected through the event $\{\tau_0 > T\}$, and treating such data as if they were generated under the unconditional law $\mathbb P_{\bm\theta}$ leads to bias.
\end{example}

\begin{figure}[tbp]
\centering
\includegraphics[width=0.64\textwidth]{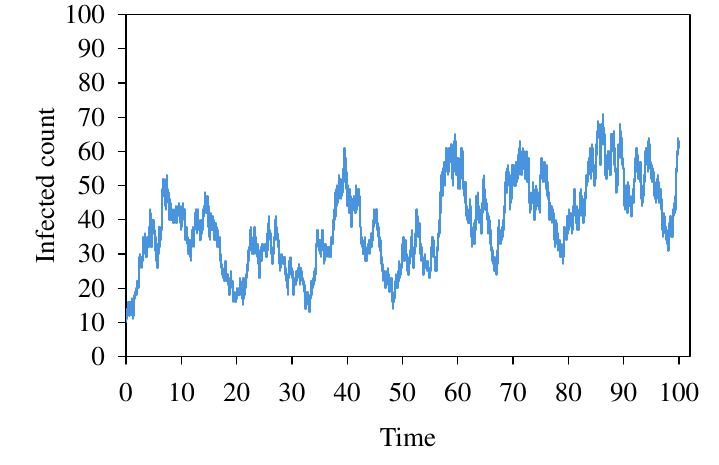}
\caption{Simulated surviving trajectory for the simplicial SIS model with $\bm\theta_0=(\beta_1,\beta_2,\mu) = (1.01/N,\,3.7/N^2,\,1)$, $N=100$, $X_0=10$, and $T=100$. In epidemic applications, trajectories of this type are far more likely to be observed than trajectories that go extinct quickly.}
\label{fig:single_trajectory_survival}
\end{figure}

If one applies the unconditional likelihood~\eqref{eq:uncond-loglik} to such data, the resulting estimator $\hat{\bm\theta}_T$ is generally biased, because it ignores the conditioning event
\[
\{\tau_0>T\},
\qquad
\tau_0:=\inf\{t>0:X_t=0\}.
\]
For moderate observation horizons and sufficiently strong transmission, the survival event may have probability close to one. However, in the present finite-state setting one has $\mathbb P_{\bm\theta}(\tau_0>T)\to0$ as $T\to\infty$, and the rate of this decay depends on the parameter values.

\begin{example}
Figure~\ref{fig:unconditional-mle-bias} illustrates the bias created by ignoring survival conditioning in the simplicial SIS model. In both panels, the estimators are computed from trajectories that survived up to time $T$, but the likelihood being maximised is the unconditional one. The resulting empirical distributions are therefore shifted away from the true parameter value.

The left panel shows this effect in the unmarked-birth setting, where birth mechanisms are latent and inference must disentangle them from the aggregate trajectory. The right panel shows that observing the birth marks does not remove the bias caused by survival selection. In that case the unconditional MLE is available in closed form through \eqref{eq:marked-uncond-loglik}, but it is still shifted away from the true parameter value. Thus marking removes the event-space deconvolution difficulty, but not the need to condition on survival.
\end{example}

\begin{figure}[tbp]
  \centering
  \begin{subfigure}{0.495\textwidth}
    \centering
    \caption{Unmarked births}
    \includegraphics[width=\textwidth]{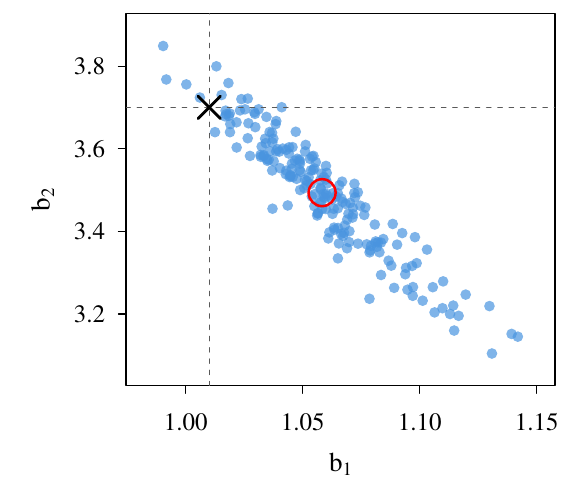}
  \end{subfigure}
  \hfill
  \begin{subfigure}{0.495\textwidth}
    \centering
    \caption{Marked births}
    \includegraphics[width=\textwidth]{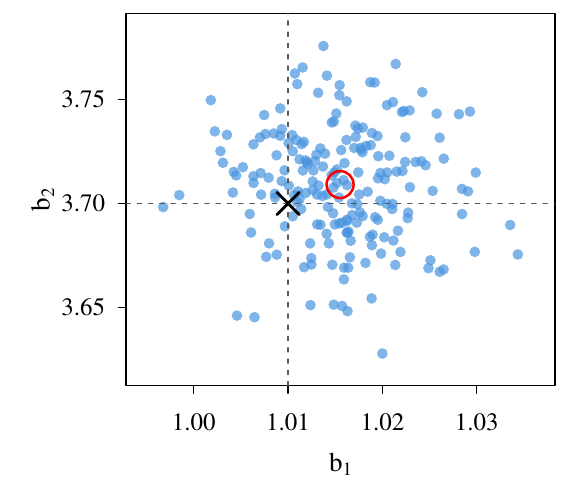}
  \end{subfigure}
\caption{Naive unconditional maximum likelihood estimates based on $200$ trajectories of the simplicial SIS model conditioned to survive up to time $T=1000$, with $N=100$ and $X_0=10$. The plotted coordinates are the scaled parameters $b_1=N\beta_1$ and $b_2=N^2\beta_2$; the true value is $(b_1,b_2)=(1.01,\,3.70)$. The black cross marks the true parameter and the red circle the empirical mean of the estimates. Panel (a) corresponds to unmarked births, and panel (b) to marked births. The two panels are shown on their own plotting scales. In both cases, ignoring survival conditioning leads to visible bias.}
\label{fig:unconditional-mle-bias}
\end{figure}

This motivates the conditional likelihood developed in the next section, where inference is formulated under the survival-conditioned law $\mathbb P_{\bm\theta}(\,\cdot\,\mid \tau_0>T)$.

\section{Survival conditioning and the \texorpdfstring{$Q$}{Q}-process}\label{sec:survival}
As discussed at the end of Section~\ref{sec:uncond-lik}, in epidemic applications the relevant inferential law is not the unconditional law $\mathbb P_{\bm\theta}$, but the law conditioned on survival up to the observation horizon,
\[
\mathbb P_{\bm\theta}(\,\cdot\,\mid \tau_0>T),
\qquad
\tau_0:=\inf\{t>0:X_t=0\}.
\]
This conditioning is conceptually natural, but technically inconvenient. Conditioning on a future event destroys time-homogeneity, so the resulting process is no longer described by a fixed generator. As a consequence, likelihood calculations and asymptotic arguments do not fit into the usual ergodic framework.

A standard way around this difficulty is to replace conditioning on survival by an equivalent change of dynamics via the Doob $h$-transform. This results in a new time-homogeneous birth-death process on the transient states $\{1,\dots,N\}$, called the $Q$-process, whose dynamics are tilted away from extinction. For large observation horizons, the $Q$-process provides the appropriate basis for conditional inference.

\subsection{Doob \texorpdfstring{$h$}{h}-transform and the \texorpdfstring{$Q$}{Q}-process}
Let $Q(\bm\theta)=(q_{k,j}(\bm\theta))_{k,j\in\mathcal S}$ denote the infinitesimal generator of the birth-death process $X$ on $\mathcal S=\{0,1,\dots,N\}$. Its entries are given by
\be
q_{k,j}(\bm\theta) =
\begin{cases}
\lambda_k(\bm\beta), & j=k+1,\\
\mu_k(\mu), & j=k-1,\\
-\bigl(\lambda_k(\bm\beta)+\mu_k(\mu)\bigr), & j=k,\\
0, & \text{otherwise},
\end{cases}
\qquad k\in\{1,\dots,N-1\},
\ee
with
\be
q_{0,j}(\bm\theta)=0,
\qquad j\in\mathcal S,
\ee
since state 0 is absorbing, and
\be
q_{N,j}(\bm\theta) =
\begin{cases}
\mu_N(\mu), & j=N-1,\\
-\mu_N(\mu), & j=N,\\
0, & \text{otherwise},
\end{cases}
\ee
since no births occur from state $N$.

Let $Q^+(\bm\theta)$ denote the restriction of $Q(\bm\theta)$ to the transient states $\{1,\dots,N\}$, obtained by deleting the row and column corresponding to the absorbing state $0$. Recall that $\Theta\subset(0,\infty)^{K+1}$ denotes the admissible parameter space introduced in Section~\ref{subsec:bdp-def}. We now impose conditions ensuring that, for every $\bm\theta\in\Theta$, the killed generator $Q^+(\bm\theta)$ is well behaved and the associated Doob $h$-transform is well defined.

\begin{assumption}[Admissible parameter region for the $Q$-process]\label{ass:irred_and_stab}
The parameter space $\Theta\subset(0,\infty)^{K+1}$ is open, and for every $\bm\theta\in\Theta$, the killed generator $Q^+(\bm\theta)$ is irreducible on $\{1,\dots,N\}$ and admits a simple maximal real eigenvalue $\gamma(\bm\theta)<0$ with strictly positive left and right eigenvectors.
\end{assumption}

\begin{remark}
\label{rem:admissible-region}
Assumption~\ref{ass:irred_and_stab} specifies the parameter region on which the killed chain and its Doob $h$-transform are well behaved. Since the state space is finite, irreducibility of $Q^+(\bm\theta)$ implies, by Perron-Frobenius theory, that the principal eigenvalue is simple and has strictly positive left and right eigenvectors. Thus, the assumption is mainly a restriction to those parameter values for which the transient chain on $\{1,\dots,N\}$ remains irreducible, so that the associated $Q$-process is well defined for every $\bm\theta\in\Theta$.
\end{remark}

Under Assumption~\ref{ass:irred_and_stab}, standard Perron-Frobenius theory for sub-stochastic generators~\cite{ColletMartinez2013} implies that, for every $\bm\theta\in\Theta$, there exists a unique maximal real eigenvalue $\gamma(\bm\theta)<0$, a strictly positive right eigenvector $\bm h_{\bm\theta}\in\mathbb R^N$, and a strictly positive left eigenvector $\bm v_{\bm\theta}\in\mathbb R^N$ such that

\be\label{eq:eigenproblem}
Q^+(\bm\theta)\bm h_{\bm\theta} =
\gamma(\bm\theta)\bm h_{\bm\theta},
\qquad
\bm v_{\bm\theta}^\top Q^+(\bm\theta) =
\gamma(\bm\theta)\bm v_{\bm\theta}^\top.
\ee

We normalise these eigenvectors so that
\be
\bm v_{\bm\theta}^\top \bm h_{\bm\theta}=1,
\qquad \bm\theta\in\Theta.
\ee

For notational convenience, we define the function $h_{\bm\theta}: \{0, 1, \ldots, N\} \to [0, \infty)$ by
\be
h_{\bm\theta}(k):= \begin{cases}
(\bm h_{\bm\theta})_k, & k\in\{1,\dots,N\},\\
0, & k=0.
\end{cases}
\ee

By extension $h_{\bm\theta}(0) = 0$, the transitions into the absorbing state are removed by the $h$-transform.

The Doob $h$-transform defines a new process $\widetilde X_t$, called the $Q$-process, with generator $\widetilde{\mathcal L}(\bm\theta)=(\widetilde q_{ij}(\bm\theta))_{1\le i,j\le N}$ given by
\be\label{eq:h-transform}
\widetilde q_{ij}(\bm\theta) =
\frac{h_{\bm\theta}(j)}{h_{\bm\theta}(i)}\,q_{ij}(\bm\theta),
\quad i\neq j,
\qquad
\widetilde q_{ii}(\bm\theta) =
-\sum_{j\neq i}\widetilde q_{ij}(\bm\theta).
\ee
Since $h_{\bm\theta}(0)=0$, the transformed rate into state 0 vanishes, and the $Q$-process evolves on $\{1,\dots,N\}$.

The $Q$-process is again a birth-death chain. Its birth and death intensities are obtained by tilting the original rates with ratios of $h_{\bm\theta}$. For $k=1,\dots,N-1$, define
\be
R^+_{\bm\theta}(k) :=
\frac{h_{\bm\theta}(k+1)}{h_{\bm\theta}(k)},
\qquad
\widetilde\lambda_k(\bm\theta) =
\lambda_k(\bm\beta)\,R^+_{\bm\theta}(k).
\ee
For $k=2,\dots,N$, define
\be
R^-_{\bm\theta}(k) :=
\frac{h_{\bm\theta}(k-1)}{h_{\bm\theta}(k)},
\qquad
\widetilde\mu_k(\bm\theta) =
\mu\,r(k)\,R^-_{\bm\theta}(k).
\ee
At the lower boundary,
\be
\widetilde\mu_1(\bm\theta) =
\mu\,r(1)\frac{h_{\bm\theta}(0)}{h_{\bm\theta}(1)} = 0,
\ee
so under the $Q$-process there is no death transition from state $1$.
These tilt factors bias the dynamics away from extinction by modifying the
transition rates through ratios of $h_{\bm\theta}$.

Under Assumption~\ref{ass:irred_and_stab}, the $Q$-process is positive recurrent on $\{1,\dots,N\}$ and admits the unique invariant distribution 
\be\label{eq:qsd}
\widetilde\pi_{\bm\theta}(k)=v_{\bm\theta}(k)h_{\bm\theta}(k).
\ee

The transformed process is again time-homogeneous and ergodic. This restores the setting needed for likelihood-based inference and for the martingale and ergodic arguments used later.

For killed Markov chains on a finite state space, the law of the process conditioned on survival up to a large horizon is well approximated, on any fixed time window, by the law of the Doob $h$-transformed process. In our setting, this means that the survival-conditioned measure
\[
\mathbb P_{\bm\theta}^{(T)}(\,\cdot\,) :=
\mathbb P_{\bm\theta}(\,\cdot\,\mid \tau_0>T)
\]
and the law $\widetilde{\mathbb P}_{\bm\theta}$ of the $Q$-process become indistinguishable on fixed observation windows as $T\to\infty$. 

For killed finite-state Markov chains, the $Q$-process captures the local dynamics of the original process conditioned on long survival. Proposition \ref{prop:asym-equivalence} shows that, on every fixed time window, the survival-conditioned law is asymptotically equivalent to the $Q$-process law.

Accordingly, in the remainder of the paper we develop and analyse inference under the $Q$-process itself, which provides a time-homogeneous surrogate for survival-conditioned dynamics.

A precise statement and proof of the required approximation are given in Appendix~\ref{app:QvsConditional}. Additional assumptions on the spectral quantities, needed for the asymptotic theory, are stated later.

\section{Conditional likelihood, scores, and estimators}\label{sec:conditional-lik}
We now specialise the unconditional likelihood-based inference developed in Section~\ref{sec:uncond-lik} to the survival-conditioned setting. Following Section~\ref{sec:survival}, we work under the Doob $h$-transformed $Q$-process, whose tilted birth and death rates are $\widetilde\lambda_k(\bm\theta)$ and $\widetilde\mu_k(\bm\theta)$. The conditional likelihood has the same form as the unconditional likelihood, but is evaluated under the transformed dynamics. From this we obtain the full score for the conditional MLE, a simplified working score for the QMLE, and the associated Fisher and Godambe information matrices.

\begin{remark}
In this section and below, ``conditional'' always refers to likelihood-based
inference under the $Q$-process law $\widetilde{\mathbb P}_{\bm\theta}$, which
serves as the time-homogeneous asymptotic surrogate of
$\mathbb P_{\bm\theta}(\,\cdot\,\mid \tau_0>T)$; see
Appendix~\ref{app:QvsConditional}.
\end{remark}

Under the $Q$-process, the counting processes $N_t^{(+)}$ and $N_t^{(-)}$ have predictable intensities
\[
\widetilde\lambda_{\widetilde X_{t-}}(\bm\theta) =
\lambda_{\widetilde X_{t-}}(\bm\beta)\,
R_{\bm\theta}^+(\widetilde X_{t-}),
\qquad
\widetilde\mu_{\widetilde X_{t-}}(\bm\theta) =
\mu\,r(\widetilde X_{t-})\,
R_{\bm\theta}^-(\widetilde X_{t-}).
\]
Hence the conditional log-likelihood on $[0,T]$ is
\begin{equation}\label{eq:cond-loglik}
\widetilde\ell_T(\bm\theta) =
\int_0^T \log \widetilde\lambda_{\widetilde X_{t-}}(\bm\theta)\,dN_t^{(+)} +
\int_0^T \log \widetilde\mu_{\widetilde X_{t-}}(\bm\theta)\,dN_t^{(-)} -
\int_0^T \bigl(
\widetilde\lambda_{\widetilde X_t}(\bm\theta) +
\widetilde\mu_{\widetilde X_t}(\bm\theta)
\bigr)\,dt.
\end{equation}

This has the same form as the unconditional log-likelihood, but with the original birth and death rates replaced by their tilted counterparts under the $Q$-process.

\begin{example}[Marked births under the $Q$-process]
The marked-birth setting remains simpler also under survival conditioning. If the mechanism type of each birth event is observed, and $N^{(+)}_{i,t}$ denotes the number of type-$i$ births in $[0,t]$, then under the $Q$-process the type-$i$ birth intensity at state $k$ is
\[
\widetilde\lambda_{i,k}(\bm\theta) =
\beta_i f_i(k)\,R_{\bm\theta}^+(k), \qquad i=1,\dots,K,
\]
so that
\[
\widetilde\lambda_k(\bm\theta) =
\sum_{i=1}^K \widetilde\lambda_{i,k}(\bm\theta) = 
R_{\bm\theta}^+(k)\sum_{i=1}^K \beta_i f_i(k).
\]
Accordingly, the birth term in \eqref{eq:cond-loglik} decomposes as
\[
\sum_{i=1}^K
\int_0^T
\log\!\bigl(\beta_i f_i(\widetilde X_{t-})R_{\bm\theta}^+(\widetilde X_{t-})\bigr)\,
dN^{(+)}_{i,t}.
\]
Thus, under the marked-birth conditional likelihood, the birth mechanisms remain separated at the level of observed jumps, and the only difference relative to the unconditional case is the presence of the common tilt factor $R_{\bm\theta}^+$. \qed
\end{example}

\subsection{Conditional score and MLE}
We work under the $Q$-process law $\widetilde{\mathbb P}_{\bm\theta}$. Define the compensated birth and death martingales by
\[
dM_t^{(+)} :=
dN_t^{(+)}-\widetilde\lambda_{\widetilde X_{t-}}(\bm\theta)\,dt,
\qquad
dM_t^{(-)} :=
dN_t^{(-)}-\widetilde\mu_{\widetilde X_{t-}}(\bm\theta)\,dt.
\]

Since births and deaths do not occur simultaneously,
\[
\langle M^{(+)},M^{(-)}\rangle_t\equiv0.
\]

Moreover, under the $Q$-process there is no death transition from state 1, since
\[
\widetilde\mu_1(\bm\theta) = \mu\,r(1)\frac{h_{\bm\theta}(0)}{h_{\bm\theta}(1)} = 0.
\]
Accordingly, all death-side log-intensity derivatives are taken only on the state set $\{2,\dots,N\}$.

Differentiating \eqref{eq:cond-loglik} with respect to $\bm\theta$ gives the score vector
\[
U_T(\bm\theta) =
\bigl(U_T^{(\bm\beta)}(\bm\theta),\,U_T^{(\mu)}(\bm\theta)\bigr),
\]
with components
\begin{align}
U_T^{(\bm\beta)}(\bm\theta) &=
\int_0^T g_{\bm\beta}^{(+)}(\widetilde X_{t-};\bm\theta)\,dM_t^{(+)} +
\int_0^T g_{\bm\beta}^{(-)}(\widetilde X_{t-};\bm\theta)\,dM_t^{(-)},
\label{eq:score-beta-full}\\
U_T^{(\mu)}(\bm\theta) &=
\int_0^T g_{\mu}^{(+)}(\widetilde X_{t-};\bm\theta)\,dM_t^{(+)} +
\int_0^T g_{\mu}^{(-)}(\widetilde X_{t-};\bm\theta)\,dM_t^{(-)},
\label{eq:score-mu-full}
\end{align}
where
\begin{align}
g_{\bm\beta}^{(+)}(k;\bm\theta) &:=
\nabla_{\bm\beta}\log \widetilde\lambda_k(\bm\theta) =
\frac{\bm f(k)}{\bm\beta^\top\bm f(k)} +
\nabla_{\bm\beta}\log R_{\bm\theta}^+(k), \label{eq:full-score-beta-plus}\\[0.25em]
g_{\bm\beta}^{(-)}(k;\bm\theta) &:=
\nabla_{\bm\beta}\log \widetilde\mu_k(\bm\theta) =
\nabla_{\bm\beta}\log R_{\bm\theta}^-(k), \label{eq:full-score-beta-minus}\\[0.25em]
g_{\mu}^{(+)}(k;\bm\theta) &:=
\partial_\mu\log \widetilde\lambda_k(\bm\theta) =
\partial_\mu\log R_{\bm\theta}^+(k), \label{eq:full-score-mu-plus}\\[0.25em]
g_{\mu}^{(-)}(k;\bm\theta) &:=
\partial_\mu\log \widetilde\mu_k(\bm\theta) =
\frac{1}{\mu} +
\partial_\mu\log R_{\bm\theta}^-(k). \label{eq:full-score-mu-minus}
\end{align}

and, by convention,
\[
g_{\bm\beta}^{(-)}(1;\bm\theta)=0,\qquad g_{\mu}^{(-)}(1;\bm\theta)=0.
\]

The key difference from the original model is that the tilt factors $R_{\bm\theta}^\pm$ depend on the full parameter vector. Consequently, the full score exhibits cross-sensitivity: even the birth score depends on $\mu$, and even the death score depends on $\bm\beta$. In particular, both birth and death martingales contribute to each score component.

Under $\widetilde{\mathbb P}_{\bm\theta_0}$, the score $U_T(\bm\theta_0)$ is a martingale. The conditional maximum likelihood estimator is defined as
\[
\widetilde{\bm\theta}_T
\quad\text{such that}\quad
U_T(\widetilde{\bm\theta}_T)=0.
\]

\begin{example}
Figures~\ref{fig:mle_consistency_unmarked} and~\ref{fig:mle_consistency_marked} illustrate the behaviour of the conditional MLE under the $Q$-process model as the observation horizon increases. In both settings, the estimator is computed from survival-conditioned trajectories, so the bias induced by naive unconditional inference is removed. As $T$ grows, the empirical clouds concentrate around the true parameter value, in agreement with the consistency proved later in the asymptotic theory.

\begin{figure}[tbp]
\centering
\begin{subfigure}{0.325\textwidth}
  \centering
  \caption{$T=200$}
  \includegraphics[width=\linewidth]{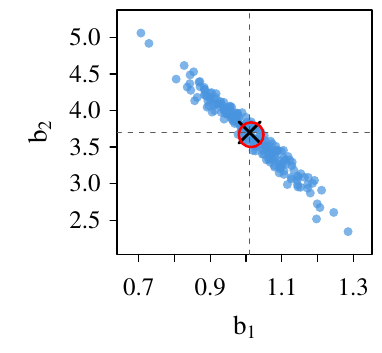}
\end{subfigure}
\hfill
\begin{subfigure}{0.325\textwidth}
  \centering
  \caption{$T=500$}
  \includegraphics[width=\linewidth]{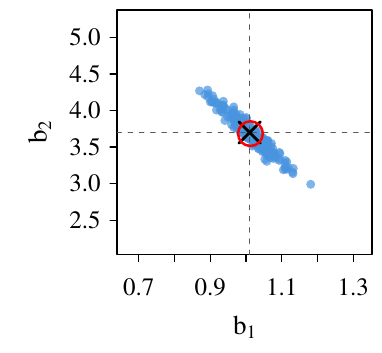}
\end{subfigure}
\hfill
\begin{subfigure}{0.325\textwidth}
  \centering
  \caption{$T=1000$}
  \includegraphics[width=\linewidth]{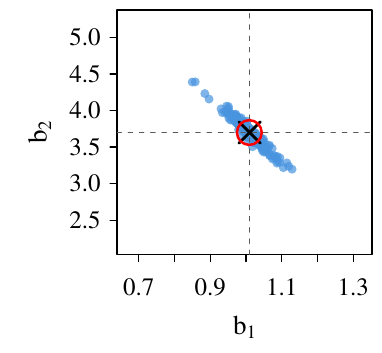}
\end{subfigure}
\caption{Conditional MLE for \emph{unmarked} births from $200$ survival-conditioned trajectories of the simplicial SIS model, with $N=100$ and $X_0=10$. The plotted coordinates are $b_1=N\beta_1$ and $b_2=N^2\beta_2$. The black cross marks the true value $(b_1,b_2)=(1.01,\,3.70)$, and the red circle the empirical mean of the estimates. As $T$ increases, the cloud contracts around the truth.}
\label{fig:mle_consistency_unmarked}
\end{figure}

\begin{figure}[tbp]
\centering
\begin{subfigure}{0.325\textwidth}
  \centering
  \caption{$T=200$}
  \includegraphics[width=\linewidth]{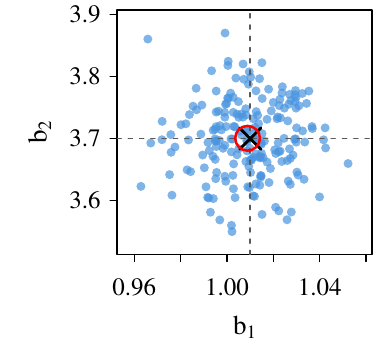}
\end{subfigure}
\hfill
\begin{subfigure}{0.325\textwidth}
  \centering
  \caption{$T=500$}
  \includegraphics[width=\linewidth]{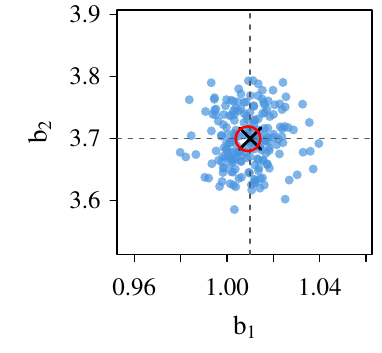}
\end{subfigure}
\hfill
\begin{subfigure}{0.325\textwidth}
  \centering
  \caption{$T=1000$}
  \includegraphics[width=\linewidth]{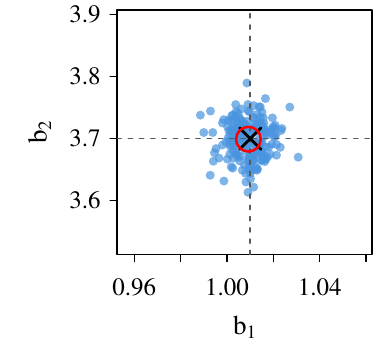}
\end{subfigure}
\caption{Conditional MLE for \emph{marked} births from $200$ survival-conditioned trajectories of the simplicial SIS model, with $N=100$ and $X_0=10$. The plotted coordinates are $b_1=N\beta_1$ and $b_2=N^2\beta_2$. The black cross marks the true value $(b_1,b_2)=(1.01,\,3.70)$, and the red circle the empirical mean of the estimates. The clouds are tighter than in Figure~\ref{fig:mle_consistency_unmarked} at each horizon, reflecting the additional information contained in the observed mechanism labels.}
\label{fig:mle_consistency_marked}
\end{figure}

In the unmarked setting, each upward jump must be attributed indirectly through the aggregate likelihood, so the estimates are more dispersed. The scatter also shows a pronounced ridge, indicating strong negative correlation between the estimators of $\beta_1$ and $\beta_2$. When birth types are observed, the additional information leads to visibly tighter clouds at every horizon. Thus marking does not change the survival-conditioned target of inference, but it makes the estimation problem substantially easier. The only remaining dependence between parameter components arises through the common tilt factors induced by the Doob $h$-transform. \qed
\end{example}

\subsection{Information matrices, working score and QMLE}\label{sec:info_matrices}
The asymptotic covariance structure is determined by the predictable quadratic variation of the full score. Write the parameter vector as
\[
\bm\theta=(\theta_1,\dots,\theta_{K+1})^\top = (\beta_1,\dots,\beta_K,\mu)^\top,
\]
and for each coordinate $\theta_a$ define the corresponding score component
\[
U_{a,T}(\bm\theta):=\partial_{\theta_a}\widetilde\ell_T(\bm\theta).
\]
From \eqref{eq:score-beta-full}--\eqref{eq:score-mu-full}, each score component has the form
\[
U_{a,T}(\bm\theta) =
\int_0^T \partial_{\theta_a}\log \widetilde\lambda_{\widetilde X_{t-}}(\bm\theta)\,dM_t^{(+)}
+
\int_0^T \partial_{\theta_a}\log \widetilde\mu_{\widetilde X_{t-}}(\bm\theta)\,dM_t^{(-)}.
\]
Since $\langle M^{(+)},M^{(-)}\rangle_t\equiv0$, the mixed birth--death bracket vanishes, and therefore
\begin{align}
\langle U_a,U_b\rangle_T &= 
\int_0^T
\partial_{\theta_a}\log \widetilde\lambda_{\widetilde X_{t-}}(\bm\theta)\,
\partial_{\theta_b}\log \widetilde\lambda_{\widetilde X_{t-}}(\bm\theta)\,
\widetilde\lambda_{\widetilde X_{t-}}(\bm\theta)\,dt
\label{eq:score-bracket-generic}
\\
&\qquad
+ \int_0^T
\partial_{\theta_a}\log \widetilde\mu_{\widetilde X_{t-}}(\bm\theta)\,
\partial_{\theta_b}\log \widetilde\mu_{\widetilde X_{t-}}(\bm\theta)\,
\widetilde\mu_{\widetilde X_{t-}}(\bm\theta)\,dt.
\nonumber
\end{align}
By ergodicity of the $Q$-process,
\[
\frac{1}{T}\langle U_a,U_b\rangle_T \convas \mathcal I_{ab}(\bm\theta),
\]
where the Fisher information matrix $\mathcal I(\bm\theta) = (\mathcal I_{ab}(\bm\theta))_{1\le a,b\le K+1}$ is given by
\begin{align}
\mathcal I_{ab}(\bm\theta) &=
\sum_{k=1}^{N-1}
\widetilde\pi_{\bm\theta}(k)\,\widetilde\lambda_k(\bm\theta)\,
\partial_{\theta_a}\log \widetilde\lambda_k(\bm\theta)\,
\partial_{\theta_b}\log \widetilde\lambda_k(\bm\theta)
\label{eq:Fisher-generic}
\\
&\qquad 
+ \sum_{k=2}^{N}
\widetilde\pi_{\bm\theta}(k)\,\widetilde\mu_k(\bm\theta)\,
\partial_{\theta_a}\log \widetilde\mu_k(\bm\theta)\,
\partial_{\theta_b}\log \widetilde\mu_k(\bm\theta).
\nonumber
\end{align}
In particular, for $a=\beta_i$ and $b=\mu$,
\begin{align}
\mathcal I_{\beta_i\mu}(\bm\theta)
&= \sum_{k=1}^{N-1}
\widetilde\pi_{\bm\theta}(k)\,\widetilde\lambda_k(\bm\theta)\,
g_{\beta_i}^{(+)}(k;\bm\theta)\,g_{\mu}^{(+)}(k;\bm\theta)
\label{eq:fisher-cross-coordinate}
\\
&\qquad 
+ \sum_{k=2}^{N}
\widetilde\pi_{\bm\theta}(k)\,\widetilde\mu_k(\bm\theta)\,
g_{\beta_i}^{(-)}(k;\bm\theta)\,g_{\mu}^{(-)}(k;\bm\theta).
\nonumber
\end{align}
Even though the mixed birth-death martingale bracket vanishes, the Fisher information matrix $\mathcal I(\bm\theta)$ need not be block diagonal because the transformed birth and death intensities themselves still depend on the full parameter vector through the tilt factors. 

The full score requires evaluation of the sensitivities $\nabla_{\bm\theta}\log R_{\bm\theta}^\pm(k)$, which in turn involves differentiating the Perron-Frobenius eigenvector of the killed generator. This is computationally expensive and may be numerically unstable when carried out repeatedly.

We therefore introduce the quasi-maximum likelihood estimator based on a simpler working score, obtained by omitting derivatives of the tilt factors in the score weights. Define
\begin{equation}\label{eq:working-score}
\bar g_{\bm\beta}(k;\bm\theta) :=
\frac{\bm f(k)}{\bm\beta^\top\bm f(k)},
\qquad
\bar g_\mu(k;\bm\theta) :=
\frac{1}{\mu},
\end{equation}

and
\begin{equation}\label{eq:working-score-vec}
\bar U_T^{(\bm\beta)}(\bm\theta) =
\int_0^T \bar g_{\bm\beta}(\widetilde X_{t-};\bm\theta)\,dM_t^{(+)},
\qquad
\bar U_T^{(\mu)}(\bm\theta) =
\int_0^T \bar g_{\mu}(\widetilde X_{t-};\bm\theta)\,dM_t^{(-)}.
\end{equation}
Writing
$
\bar U_T(\bm\theta) =
\bigl(\bar U_T^{(\bm\beta)}(\bm\theta),\,\bar U_T^{(\mu)}(\bm\theta)\bigr),
$
we define the quasi-maximum likelihood estimator by
\[
\bar{\bm\theta}_T
\quad\text{such that}\quad
\bar U_T(\bar{\bm\theta}_T)=0.
\]

Although the derivatives of the tilt factors are omitted from the score, the tilted intensities themselves remain in the compensators. The resulting estimator is therefore simpler computationally, while still targeting the same survival-conditioned model.

For the QMLE, the asymptotic covariance is determined by the Godambe information
\begin{equation}\label{eq:Godambe}
\mathcal G(\bm\theta) =
\mathcal H(\bm\theta)^\top
\mathcal J(\bm\theta)^{-1}
\mathcal H(\bm\theta),
\end{equation}
where
\[
\mathcal J(\bm\theta) =
\begin{pmatrix}
\mathcal J_{\bm\beta\bm\beta}(\bm\theta) & 0\\
0 & \mathcal J_{\mu\mu}(\bm\theta)
\end{pmatrix},
\]
with
\begin{align}
\mathcal J_{\bm\beta\bm\beta}(\bm\theta) &=
\sum_{k=1}^{N-1}
\widetilde\pi_{\bm\theta}(k)\,\widetilde\lambda_k(\bm\theta)\,
\bar g_{\bm\beta}(k;\bm\theta)\bar g_{\bm\beta}(k;\bm\theta)^\top,
\label{eq:J-beta-corrected}\\
\mathcal J_{\mu\mu}(\bm\theta) &=
\sum_{k=2}^{N}
\widetilde\pi_{\bm\theta}(k)\,\widetilde\mu_k(\bm\theta)\,
\bar g_{\mu}(k;\bm\theta)^2,
\label{eq:J-mu-corrected}
\end{align}
and $\mathcal H(\bm\theta)$ has components
\begin{align}
\mathcal H_{\bm\beta\bm\beta}(\bm\theta) &=
\sum_{k=1}^{N-1}
\widetilde\pi_{\bm\theta}(k)\,\widetilde\lambda_k(\bm\theta)\,
g_{\bm\beta}^{(+)}(k;\bm\theta)\bar g_{\bm\beta}(k;\bm\theta)^\top,
\label{eq:H-beta-beta-corrected}\\
\mathcal H_{\bm\beta\mu}(\bm\theta) &=
\sum_{k=2}^{N}
\widetilde\pi_{\bm\theta}(k)\,\widetilde\mu_k(\bm\theta)\,
g_{\bm\beta}^{(-)}(k;\bm\theta)\bar g_{\mu}(k;\bm\theta),
\label{eq:H-beta-mu-corrected}\\
\mathcal H_{\mu\bm\beta}(\bm\theta) &=
\sum_{k=1}^{N-1}
\widetilde\pi_{\bm\theta}(k)\,\widetilde\lambda_k(\bm\theta)\,
g_{\mu}^{(+)}(k;\bm\theta)\bar g_{\bm\beta}(k;\bm\theta)^\top,
\label{eq:H-mu-beta-corrected}\\
\mathcal H_{\mu\mu}(\bm\theta) &=
\sum_{k=2}^{N}
\widetilde\pi_{\bm\theta}(k)\,\widetilde\mu_k(\bm\theta)\,
g_{\mu}^{(-)}(k;\bm\theta)\bar g_{\mu}(k;\bm\theta).
\label{eq:H-mu-mu-corrected}
\end{align}
Thus $\mathcal J(\bm\theta)$ is block-diagonal because the working score separates birth and death contributions, whereas $\mathcal H(\bm\theta)$ generally is not.

\section{Asymptotic properties}\label{sec:asymptotic}
Throughout this section, we work under Assumption~\ref{ass:irred_and_stab} from Section~\ref{sec:survival}. In addition, we require the following non-degeneracy conditions.

\begin{assumption}[Identifiability of the survival-conditioned model]
\label{ass:identifiability}
For $\bm \theta,\bm \theta' \in \Theta$, if
\[
\widetilde\lambda_k(\bm \theta)=\widetilde\lambda_k(\bm \theta')
\quad \text{for all } k=1,\dots,N-1,
\qquad
\widetilde\mu_k(\bm \theta)=\widetilde\mu_k(\bm \theta')
\quad \text{for all } k=2,\dots,N,
\]
then $\bm \theta= \bm \theta'$.
\end{assumption}

\begin{assumption}[Identifiability for the conditional MLE]\label{ass:ident-mle}
The Fisher information matrix $\mathcal I(\bm\theta_0)$ is positive definite.
\end{assumption}

\begin{assumption}[Identifiability for the QMLE]\label{ass:ident-qmle}
The Godambe information matrix $\mathcal G(\bm\theta_0)$ is positive definite. In particular, this holds if $\mathcal H(\bm\theta_0)$ is non-singular and $\mathcal J(\bm\theta_0)$ is positive definite.
\end{assumption}

\begin{remark}
\label{rem:identifiability}
Assumption~\ref{ass:identifiability} ensures that the true parameter $\bm \theta_0$ is the unique parameter value generating the transformed birth and death intensities of the $Q$-process. Accordingly, in the consistency proof for the conditional MLE, the limiting normalized log-likelihood has a unique maximizer at $\bm \theta_0$; see Appendix \ref{sec:proofs}. For the QMLE, the same assumption identifies the target parameter, while Assumption~\ref{ass:ident-qmle} provides the local non-degeneracy needed to show that $\bm\theta_0$ is an isolated zero of the limiting estimating function.
\end{remark}

The first main limit theorem concerns the estimator $\widetilde{\bm\theta}_T$ defined by the conditional score equation $U_T(\bm\theta)=0$.

\begin{theorem}[Consistency and normality of the MLE under the $Q$-process]\label{thm:clt-mle}
Suppose that Assumptions~\ref{ass:irred_and_stab}, \ref{ass:identifiability} and \ref{ass:ident-mle} hold. Let $K\subset\Theta$ be a sufficiently small compact neighbourhood of $\bm\theta_0$, and let $\widetilde{\bm\theta}_T$ be a solution of the score equation $U_T(\bm\theta)=0$ taking values in $K$. Then, as $T\to\infty$,
\[
\widetilde{\bm\theta}_T \stackrel{P}{\to}\bm\theta_0, \quad 
\sqrt{T}\bigl(\widetilde{\bm\theta}_T-\bm\theta_0\bigr)
\convd
\mathcal N\!\bigl(0,\mathcal I(\bm\theta_0)^{-1}\bigr)
\quad
\text{under }
\quad
\widetilde{\mathbb P}_{\bm\theta_0}.
\]
\end{theorem}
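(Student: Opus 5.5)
The argument follows the classical route for maximum likelihood in ergodic counting-process models: a uniform law of large numbers for the normalised log-likelihood, then a martingale central limit theorem for the score and a Taylor expansion. Everything rests on the fact that, under $\widetilde{\mathbb P}_{\bm\theta_0}$, the $Q$-process is a finite-state, irreducible, positive recurrent chain with invariant law $\widetilde\pi_{\bm\theta_0}$. Hence occupation times satisfy $T_k(T)/T\to\widetilde\pi_{\bm\theta_0}(k)$ almost surely. Statewise jump counts satisfy $N^{(+)}_k(T)/T\to\widetilde\pi_{\bm\theta_0}(k)\widetilde\lambda_k(\bm\theta_0)$, and similarly for deaths.

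A preliminary step is smoothness in $\bm\theta$. Since $\gamma(\bm\theta)$ is a simple eigenvalue of $Q^+(\bm\theta)$, whose entries are affine in $\bm\theta$, analytic perturbation theory gives $\bm h_{\bm\theta},\bm v_{\bm\theta}$ as $C^\infty$ (indeed real-analytic) functions on $\Theta$ after normalisation. It follows that $\widetilde\lambda_k,\widetilde\mu_k,R^\pm_{\bm\theta}$ and $\widetilde\pi_{\bm\theta}$ are smooth. They are also strictly positive, for $k\le N-1$ on the birth side and $k\ge2$ on the death side. On a compact neighbourhood $K$ of $\bm\theta_0$, all of these quantities and their first three derivatives are therefore bounded, and bounded away from zero where relevant.

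\textbf{Consistency.} Because the state space is finite, the conditional log-likelihood is a finite sum,
\[
\widetilde\ell_T(\bm\theta)=\sum_{k=1}^{N-1}\bigl[N^{(+)}_k(T)\log\widetilde\lambda_k(\bm\theta)-T_k(T)\widetilde\lambda_k(\bm\theta)\bigr]+\sum_{k=2}^{N}\bigl[N^{(-)}_k(T)\log\widetilde\mu_k(\bm\theta)-T_k(T)\widetilde\mu_k(\bm\theta)\bigr].
\]
In this form the dependence on the data enters only through finitely many random coefficients. The ergodic limits above therefore give almost sure convergence of $T^{-1}\widetilde\ell_T(\bm\theta)$ to
\[
L(\bm\theta)=\sum_k\widetilde\pi_{\bm\theta_0}(k)\bigl[\widetilde\lambda_k(\bm\theta_0)\log\widetilde\lambda_k(\bm\theta)-\widetilde\lambda_k(\bm\theta)\bigr]+(\text{death analogue}),
\]
uniformly on $K$. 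The same representation applies to $T^{-1}U_T$ and $T^{-1}\nabla U_T$. By the elementary inequality $a\log b-b\le a\log a-a$, with equality iff $a=b$, combined with Assumption~\ref{ass:identifiability}, $\bm\theta_0$ is the unique maximiser of $L$.

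For consistency of a root of the score, I would use $\nabla^2 L(\bm\theta_0)=-\mathcal I(\bm\theta_0)$, obtained by differentiating and using $\sum_k\widetilde\pi\,\nabla\widetilde\lambda_k\cdot 0$-type cancellations at the truth. By Assumption~\ref{ass:ident-mle} this matrix is negative definite, so $L$ is strictly concave on a small enough $K$. Then $\bm\theta_0$ is the only zero of $\nabla L$ in $K$. Uniform convergence $T^{-1}U_T\to\nabla L$ on $K$, combined with the standard argmin/Z-estimator argument, gives $\widetilde{\bm\theta}_T\to\bm\theta_0$ in probability. Choosing $K$ small is exactly the point of the hypothesis ``sufficiently small compact neighbourhood''.

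\textbf{Asymptotic normality.} Under $\widetilde{\mathbb P}_{\bm\theta_0}$, $U_T(\bm\theta_0)$ is a square-integrable martingale with bounded integrands and jumps bounded by a constant. The earlier computation \eqref{eq:score-bracket-generic} together with ergodicity gives $T^{-1}\langle U\rangle_T\to\mathcal I(\bm\theta_0)$ almost surely. The Lindeberg condition is trivial: the jumps of $T^{-1/2}U$ are $O(T^{-1/2})$. Rebolledo's martingale CLT \cite{ABGK1993} then gives $T^{-1/2}U_T(\bm\theta_0)\convd\mathcal N(0,\mathcal I(\bm\theta_0))$. Next I would Taylor expand,
\[
0=U_T(\widetilde{\bm\theta}_T)=U_T(\bm\theta_0)+\Bigl[\int_0^1\nabla U_T(\bm\theta_0+s(\widetilde{\bm\theta}_T-\bm\theta_0))\,ds\Bigr](\widetilde{\bm\theta}_T-\bm\theta_0).
\]
The bracketed matrix divided by $T$ converges in probability to $-\mathcal I(\bm\theta_0)$, by the uniform convergence of $T^{-1}\nabla U_T$, continuity, and consistency. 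Slutsky's lemma then yields $\sqrt T(\widetilde{\bm\theta}_T-\bm\theta_0)\convd\mathcal N(0,\mathcal I(\bm\theta_0)^{-1})$.

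\textbf{Main obstacle.} The probabilistic parts are routine because the state space is finite. The delicate step is the regularity of the tilt factors: proving smoothness of the Perron eigenvector in $\bm\theta$ with bounds uniform on $K$, and verifying the identity $-\nabla^2L(\bm\theta_0)=\mathcal I(\bm\theta_0)$, where the extra terms vanish because $\sum_k\widetilde\pi_{\bm\theta_0}(k)\bigl(\widetilde\lambda_k(\bm\theta_0)/\widetilde\lambda_k(\bm\theta)-1\bigr)\nabla^2\widetilde\lambda_k$ is zero at $\bm\theta_0$. For the eigenvector regularity I would invoke the implicit function theorem on the map $(\bm h,\gamma)\mapsto(Q^+(\bm\theta)\bm h-\gamma\bm h,\ \bm v^\top\bm h-1)$, whose Jacobian is invertible by simplicity of $\gamma$.
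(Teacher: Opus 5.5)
Your argument is correct. Its skeleton --- the finite-sum representation of $\widetilde\ell_T$, ergodic limits, a martingale CLT for $U_T(\bm\theta_0)$, and a Taylor expansion --- is the classical one the paper relies on, but your consistency step takes a genuinely different route.

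The paper (Proposition~\ref{prop:consistency-mle-B}) runs an M-estimator argument for a \emph{maximiser} of $\widetilde\ell_T$ over $K$. The Kullback--Leibler-type inequality and the global identifiability Assumption~\ref{ass:identifiability} make $\bm\theta_0$ the unique maximiser of the limit contrast $\Lambda$ (your $L$). The link to roots of the score is only the remark that an interior maximiser solves $U_T=0$, and the normality part is delegated to \cite[Chapter VI]{ABGK1993}.

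You instead run a Z-estimator argument on the score itself. The identity $\nabla^2L(\bm\theta_0)=-\mathcal I(\bm\theta_0)$ and Assumption~\ref{ass:ident-mle} give strict concavity of $L$ on a small ball, so $\bm\theta_0$ is the only zero of $\nabla L$ there. Uniform convergence $T^{-1}U_T\to\nabla L$ then pushes every root in $K$ into any neighbourhood of $\bm\theta_0$. This is the same mechanism the paper uses for the QMLE in Proposition~\ref{prop:consistency-qmle-B}.

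Your route fits the theorem as stated (an arbitrary root in a sufficiently small $K$) more exactly, since an argmax argument by itself does not exclude spurious roots of the score in $K$. It also makes Assumption~\ref{ass:identifiability} essentially idle. The paper's route, in exchange, controls the maximiser over any compact $K\subset U$, not only a small one, which is what global identifiability buys.

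Your explicit normality step is exactly what the cited counting-process theorem does in general, made self-contained here. It uses Rebolledo's CLT with a trivially verified Lindeberg condition, an integral-form Taylor expansion, and uniform convergence of $T^{-1}\nabla U_T$ read off the finite-sum representation.

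One small point: take $K$ to be a closed ball, so that both the concavity argument and the Taylor segment stay inside $K$.
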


The second limit theorem concerns the quasi-maximum likelihood estimator $\bar{\bm\theta}_T$ defined by the working score equation $\bar U_T(\bm\theta)=0$.

\begin{theorem}[Consistency and normality of the QMLE under the $Q$-process]\label{thm:clt-qmle}
Suppose that Assumptions~\ref{ass:irred_and_stab}, \ref{ass:identifiability} and \ref{ass:ident-qmle} hold. Let $K\subset\Theta$ be a sufficiently small compact neighbourhood of $\bm\theta_0$, and let $\bar{\bm\theta}_T$ be a solution of the working score equation $\bar U_T(\bm\theta)=0$ taking values in $K$. Then, as $T\to\infty$,
\[
\bar{\bm\theta}_T \stackrel{P}{\to} \bm\theta_0, \qquad
\sqrt{T}\bigl(\bar{\bm\theta}_T-\bm\theta_0\bigr)
\convd
\mathcal N\!\bigl(0,\mathcal G(\bm\theta_0)^{-1}\bigr)
\quad
\text{under }\widetilde{\mathbb P}_{\bm\theta_0}.
\]
\end{theorem}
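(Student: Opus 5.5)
The proof is a standard Z-estimator argument for the working estimating equation $\bar U_T(\bm\theta)=0$, driven by the ergodic theorem and the martingale central limit theorem for the positive-recurrent $Q$-process.

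\emph{Step 1: the limiting estimating function.} Write the working score by substituting $dM_t^{(\pm)}$ under a generic $\bm\theta$. Since $dN^{(+)}_t = dM^{(+),0}_t + \widetilde\lambda_{\widetilde X_{t-}}(\bm\theta_0)\,dt$, and similarly for deaths, we get
\[
\frac1T\bar U_T^{(\bm\beta)}(\bm\theta)=\frac1T\int_0^T \bar g_{\bm\beta}(\widetilde X_{t-};\bm\theta)\,dM^{(+),0}_t+\frac1T\int_0^T \bar g_{\bm\beta}(\widetilde X_{t};\bm\theta)\bigl[\widetilde\lambda_{\widetilde X_t}(\bm\theta_0)-\widetilde\lambda_{\widetilde X_t}(\bm\theta)\bigr]dt .
\]
The $\mu$-component is handled the same way. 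The state space is finite, and $\bm\theta\mapsto h_{\bm\theta}$ is smooth on $K$ because the simple Perron eigenvalue of Assumption~\ref{ass:irred_and_stab} is simple. Hence all weights and tilted rates are jointly continuous and uniformly bounded on $K$, with $\bm\beta^\top\bm f(k)$ bounded below on $K$.

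The martingale term has predictable variation $O(T)$ uniformly in $\bm\theta$. Because its integrand is the fixed function $\bar g(k;\bm\theta)$ summed over finitely many states, we can write it as $\sum_k \bar g(k;\bm\theta)\,M^{(\pm),0}_{k,T}/T$ with statewise martingales $M_{k,T}$. Each such term tends to $0$ a.s., so the convergence is uniform in $\bm\theta$ automatically. The drift term equals $\sum_k \bar g(k;\bm\theta)\,[\cdots]\,T_k(T)/T$, and the ergodic theorem gives $T_k(T)/T\to\widetilde\pi_{\bm\theta_0}(k)$ a.s.

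Thus $T^{-1}\bar U_T(\bm\theta)\to\Psi(\bm\theta)$ uniformly on $K$ a.s., where
\[
\Psi_{\bm\beta}(\bm\theta)=\sum_k\widetilde\pi_{\bm\theta_0}(k)\,\bar g_{\bm\beta}(k;\bm\theta)\bigl[\widetilde\lambda_k(\bm\theta_0)-\widetilde\lambda_k(\bm\theta)\bigr],
\]
with the analogous death-side expression for $\Psi_\mu$. Clearly $\Psi(\bm\theta_0)=0$. Differentiating at $\bm\theta_0$, only the bracket contributes. Since $\partial\widetilde\lambda_k=\widetilde\lambda_k\,\partial\log\widetilde\lambda_k$, we obtain $\nabla\Psi(\bm\theta_0)=-\mathcal H(\bm\theta_0)^\top$, up to the transpose convention matching \eqref{eq:H-beta-beta-corrected}--\eqref{eq:H-mu-mu-corrected}.

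\emph{Step 2: consistency.} Assumption~\ref{ass:ident-qmle} makes $\mathcal H(\bm\theta_0)$ nonsingular, so by the inverse function theorem $\bm\theta_0$ is the unique zero of $\Psi$ on a small enough compact neighbourhood $K$. This is where "sufficiently small" is used; Assumption~\ref{ass:identifiability} is what guarantees that this local zero is the correct target. Uniform convergence plus an isolated unique zero then gives $\bar{\bm\theta}_T\to\bm\theta_0$ in probability, indeed a.s., by the standard argument: $\inf_{|\bm\theta-\bm\theta_0|\ge\varepsilon}|\Psi|>0$.

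\emph{Step 3: normality.} Take a mean-value expansion row by row:
\[
0=\bar U_T(\bar{\bm\theta}_T)=\bar U_T(\bm\theta_0)+\nabla\bar U_T(\bm\theta_T^*)(\bar{\bm\theta}_T-\bm\theta_0).
\]
First, $T^{-1/2}\bar U_T(\bm\theta_0)$ is a martingale whose bracket satisfies $T^{-1}\langle\bar U\rangle_T\to\mathcal J(\bm\theta_0)$ by ergodicity; the block-diagonal form comes from $\langle M^{(+)},M^{(-)}\rangle=0$. Its jumps are bounded by $C/\sqrt T$, so the Lindeberg condition is trivial, and the martingale CLT (Rebolledo) gives $\mathcal N(0,\mathcal J)$. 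Second, $T^{-1}\nabla\bar U_T(\bm\theta)$ consists of bounded martingale terms divided by $T$ plus ergodic averages, so it converges uniformly on $K$ to $\nabla\Psi(\bm\theta)$. Continuity and consistency then give $T^{-1}\nabla\bar U_T(\bm\theta_T^*)\to-\mathcal H^\top$. Hence
\[
\sqrt T(\bar{\bm\theta}_T-\bm\theta_0)\Rightarrow \mathcal N\bigl(0,\mathcal H^{-\top}\mathcal J\mathcal H^{-1}\bigr)=\mathcal N(0,\mathcal G^{-1}).
\]

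\emph{Main obstacle.} The delicate point is regularity of $\bm\theta\mapsto R^\pm_{\bm\theta}(k)$, namely $C^1$ with bounded derivatives on $K$. This is needed both for the uniform convergence of $T^{-1}\nabla\bar U_T$ and for identifying $\nabla\Psi(\bm\theta_0)$. I would obtain it from analytic perturbation theory for the simple eigenvalue $\gamma(\bm\theta)$ of the matrix $Q^+(\bm\theta)$, which is polynomial in $\bm\theta$, together with positivity of $h_{\bm\theta}$ on the compact set $K$.
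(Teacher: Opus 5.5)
Your argument follows the paper's proof step for step. Both use uniform a.s.\ convergence of $T^{-1}\bar U_T$ to $\Psi$ through the statewise counts and occupation times, the identity $D\Psi(\bm\theta_0)=-\mathcal H(\bm\theta_0)^\top$, and the inverse function theorem to make $\bm\theta_0$ the unique zero on a small $K$. The paper then delegates the Taylor expansion and martingale CLT to general counting-process M-estimator theory; you carry them out, and correctly. The one step that fails is the last equality, exactly where you hedged with ``up to the transpose convention''.

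Set $S:=-D\Psi(\bm\theta_0)=\mathcal H(\bm\theta_0)^\top$. This is correct, because in \eqref{eq:H-beta-beta-corrected}--\eqref{eq:H-mu-mu-corrected} the rows of $\mathcal H$ are indexed by the full-score weights $g^{(\pm)}$ and the columns by the working-score weights $\bar g$. Your expansion then gives the covariance
\[
S^{-1}\mathcal J S^{-\top}=\mathcal H^{-\top}\mathcal J\,\mathcal H^{-1}=\bigl(\mathcal H\mathcal J^{-1}\mathcal H^\top\bigr)^{-1}.
\]
But by \eqref{eq:Godambe} the statement's $\mathcal G$ is $\mathcal H^\top\mathcal J^{-1}\mathcal H$, whose inverse is $\mathcal H^{-1}\mathcal J\,\mathcal H^{-\top}$. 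The two agree only if $\mathcal H$ is symmetric, and here it is not:
\begin{itemize}
\item $\mathcal H_{\bm\beta\bm\beta}=\mathcal J_{\bm\beta\bm\beta}+\sum_k\widetilde\pi_{\bm\theta_0}(k)\widetilde\lambda_k(\bm\theta_0)\,\nabla_{\bm\beta}\log R^+_{\bm\theta_0}(k)\,\bar g_{\bm\beta}(k;\bm\theta_0)^\top$ carries a non-symmetric tilt term.
\item The off-diagonal blocks are unrelated quantities: $\mathcal H_{\bm\beta\mu}$ is death-side, built from $\nabla_{\bm\beta}\log R^-$, while $\mathcal H_{\mu\bm\beta}^\top$ is birth-side, built from $\partial_\mu\log R^+$.
\end{itemize}
So your final ``$=\mathcal N(0,\mathcal G^{-1})$'' is unjustified and false in general.

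In fact your covariance is the right one, and it is the statement that is transposed. The paper's proof makes the mirror-image slip: it asserts $D\Psi(\bm\theta_0)=-\mathcal H^\top$ but then writes the sandwich as $\mathcal H^{-1}\mathcal J\,\mathcal H^{-\top}$, which would require $D\Psi(\bm\theta_0)=-\mathcal H$. The repair is to take the Godambe information to be $S^\top\mathcal J^{-1}S=\mathcal H\mathcal J^{-1}\mathcal H^\top$, or equivalently to transpose the definition of $\mathcal H$. You should state this explicitly rather than absorb it into a convention.

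Nothing else in your proof changes. In particular, Assumption~\ref{ass:ident-qmle} is unaffected: for positive definite $\mathcal J$, both $\mathcal H^\top\mathcal J^{-1}\mathcal H$ and $\mathcal H\mathcal J^{-1}\mathcal H^\top$ are positive definite exactly when $\mathcal H$ is nonsingular.
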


Thus the QMLE is asymptotically normal under the $Q$-process law, with covariance determined by the inverse Godambe information rather than the inverse Fisher information.

Appendix~\ref{app:QvsConditional} shows that, on every fixed time window, the survival-conditioned law is asymptotically approximated by the $Q$-process law. In this sense, the $Q$-process provides the natural time-homogeneous surrogate for survival-conditioned inference.

The proofs of Theorems~\ref{thm:clt-mle} and~\ref{thm:clt-qmle} are given in Appendix~\ref{sec:proofs}. They are based on the martingale representations of the full score and the working score under the $Q$-process. Ergodicity yields the corresponding law of large numbers for the predictable quadratic variation and sensitivity matrices, while a martingale central limit theorem gives the asymptotic normal limits.

\begin{example}
Figure~\ref{fig:comparison_estimator_means} compares the three estimators (unconditional MLE, conditional MLE and conditional QMLE) through their sample means over increasing observation horizons. The sample mean of the naive estimator $\hat{\bm\theta}_T$ remains biased away from the true parameter values (dashed lines), reflecting survival bias. In contrast, the sample means of $\widetilde{\bm\theta}_T$ and $\bar{\bm\theta}_T$ converge to the true values as $T$ grows, in agreement with the consistency predicted by Theorems~\ref{thm:clt-mle} and~\ref{thm:clt-qmle}. \qed
\end{example}

\begin{figure}[tbp]
\centering
\begin{subfigure}{0.495\textwidth}
  \centering
  \caption{Estimating $b_1=N\beta_1$}
  \includegraphics[width=\linewidth]{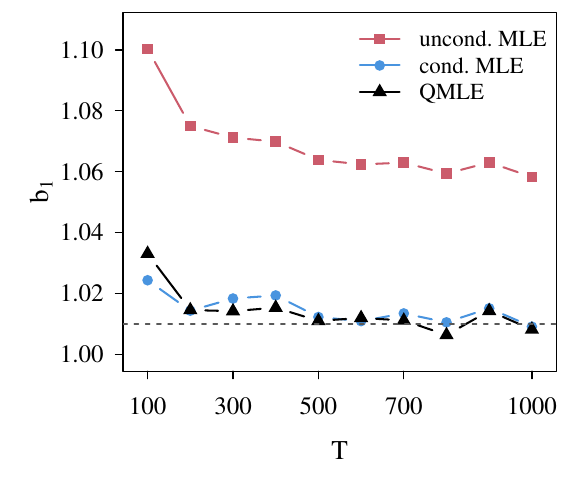}
\end{subfigure}
\hfill
\begin{subfigure}{0.495\textwidth}
  \centering
  \caption{Estimating $b_2=N^2\beta_2$}
  \includegraphics[width=\linewidth]{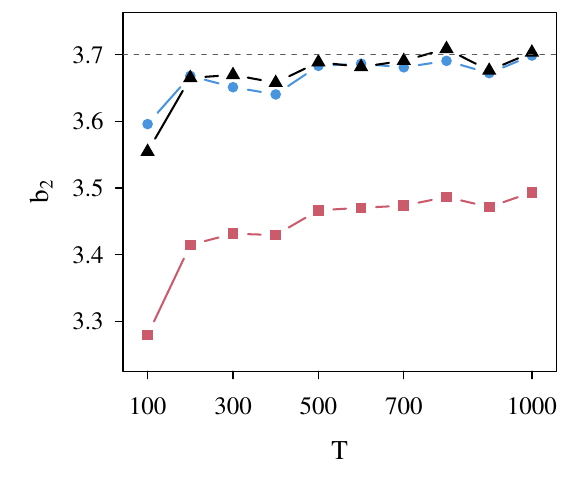}
\end{subfigure}
\caption{Sample means of the three estimators based on $200$ surviving trajectories for each $T\in\{100,200,\ldots,1000\}$, with $N=100$ and $X_0=10$. The plotted coordinates are the scaled parameters $b_1=N\beta_1$ and $b_2=N^2\beta_2$; the true values are $b_1=1.01$ and $b_2=3.70$. The dashed horizontal line marks the true value in each panel. The naive unconditional MLE remains biased, whereas the conditional MLE and the QMLE move toward the truth as the observation horizon increases.}
\label{fig:comparison_estimator_means}
\end{figure}

\subsection{Testing for the presence of a birth mechanism}\label{sec:testing}
A natural application of the asymptotic theory is to test whether a specific birth mechanism is present. We restrict attention to one-dimensional tests, which already cover the main epidemiological question of interest: can a given higher-order transmission effect be detected from the aggregate trajectory alone?

\paragraph{A one-sided Wald (Normal) test based on an unconstrained estimator.}
Fix $i\in\{1,\dots,K\}$ such that the null value $\beta_i =0$ admits an open admissible neighbourhood on which the transformed intensities remain well defined and strictly positive, and the killed generator $Q^+(\bm\theta)$ remains irreducible. For testing, we work with an open admissible parameter set $\Theta_{\mathrm{test}}$ on which the transformed intensities are well defined and strictly positive, and in which the coordinate $\beta_i$ is allowed to take either sign. In particular, the null value $\beta_i=0$ is an interior point of $\Theta_{\mathrm{test}}$. We emphasise that this is only a local enlargement used for the Wald test, while the true epidemiological parameter space remains $\Theta\subset(0,\infty)^{K+1}$. 

We consider the hypotheses
\[
H_{0,i}:\beta_i=0
\qquad \text{versus} \qquad
H_{1,i}:\beta_i>0,
\]
with all remaining parameters treated as nuisance parameters. Let
\[
\widetilde{\bm\theta}_T=
(\widetilde\beta_{1,T},\dots,\widetilde\beta_{K,T},\widetilde\mu_T)
\] 
denote the unconstrained conditional MLE under the $Q$-process model, now viewed as an estimator on the enlarged parameter space $\Theta_{\mathrm{test}}$, and let $\widehat{\mathcal I}_T$ be a consistent estimator of the Fisher information matrix $\mathcal I(\bm\theta_0)$. Under $H_{0,i}$, assuming that the same regularity, identifiability, and non-degeneracy conditions used for Theorem~\ref{thm:clt-mle} hold on a neighbourhood of $\bm\theta_0$ inside $\Theta_{\mathrm{test}}$, we have
\[
\sqrt{T}\bigl(\widetilde{\bm\theta}_T-\bm\theta_0\bigr)
\convd
\mathcal N\!\bigl(0,\mathcal I(\bm\theta_0)^{-1}\bigr).
\]
Hence the studentized statistic
\begin{equation}\label{eq:ZiT}
Z_{i,T} :=
\frac{\widetilde\beta_{i,T}}
{\widehat{\mathrm{se}}(\widetilde\beta_{i,T})},
\qquad
\widehat{\mathrm{se}}(\widetilde\beta_{i,T}) :=
\sqrt{\frac{\bigl(\widehat{\mathcal I}_T^{-1}\bigr)_{ii}}{T}},
\end{equation}
satisfies
\begin{equation}\label{eq:ZiT-null}
Z_{i,T}\convd \mathcal N(0,1)
\qquad\text{under }H_{0,i}.
\end{equation}
Since the alternative is one-sided, we reject $H_{0,i}$ for large positive values of $Z_{i,T}$. Thus an asymptotic level-$\alpha$ test rejects when
\[
Z_{i,T}> z_{1-\alpha},
\]
where $z_{1-\alpha}$ is the $(1-\alpha)$-quantile of the standard normal distribution. The corresponding asymptotic one-sided $p$-value is
\[
p_{i,T}=1-\Phi(Z_{i,T}).
\]
Equivalently, one may use the projected Wald statistic
\begin{equation}\label{eq:WiT}
W_{i,T}:=\bigl(\max\{0,Z_{i,T}\}\bigr)^2.
\end{equation}
By \eqref{eq:ZiT-null}, this statistic has asymptotic null distribution
\begin{equation}\label{eq:boundary-mixture}
W_{i,T}\convd \tfrac12\chi_0^2+\tfrac12\chi_1^2.
\end{equation}
This is simply the distribution of the square of the positive part of a standard normal random variable.

\begin{example}
In the simplicial SIS model, the most relevant case is
\[
H_0:\beta_2=0
\qquad \text{versus} \qquad
H_1:\beta_2>0,
\]
which tests for the presence of triadic transmission. This is the form of the test used in our numerical experiments. Figure~\ref{fig:boundary-testing} summarises the behaviour of the test in this case. Under $H_0$, the standardized statistic $Z_{2,T}$ is asymptotically standard normal, and the equivalent projected statistic $W_{2,T}$ has the $\tfrac12\chi_0^2+\tfrac12\chi_1^2$ limit law. \qed
\end{example}

\begin{figure}[tbp]
\centering
\includegraphics[width=0.46\textwidth]{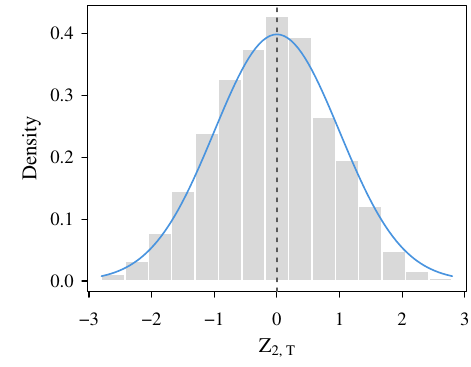}
\caption{Testing $H_0:\beta_2=0$ versus $H_1:\beta_2>0$ in the simplicial SIS model. The figure is based on $1000$ survival-conditioned trajectories of the simplicial SIS model, with $N=100$, $X_0=10$, and observation horizon $T=1000$. The data-generating parameters under $H_0$ are $b_1=N\beta_1=2.875$, $b_2=N^2\beta_2=0$, and $\mu=1$. In each replicate, the standardized statistic $Z_{2,T}$ was computed from the unconstrained conditional MLE together with its Fisher-information-based standard error. The histogram shows the empirical null distribution of $Z_{2,T}$. The blue curve is the standard normal density, and the dashed vertical line marks $0$. The simulated distribution is close to the $\mathcal N(0,1)$ limit predicted by the asymptotic theory.}
\label{fig:boundary-testing}
\end{figure}

\section{Discussion}\label{sec:discussion}
We have developed a likelihood-based framework for inference in composite birth-death processes observed through a single aggregate trajectory. This requires dealing with two main difficulties. First, births can arise through multiple distinct mechanisms, while only the aggregate population size is observed. Second, the presence of an absorbing extinction state means that the available data are typically subject to survival selection. Together, these features make naive inference problematic even in a one-dimensional state space.

Our main result is that the Doob $h$-transform yields a tractable, time-homogeneous surrogate for the original process under long-survival conditioning. This surrogate is the ergodic $Q$-process on the transient states. Working under the $Q$-process law allows us to derive a conditional likelihood, a full score for the conditional MLE $\widetilde{\bm \theta}_T$, and a simpler working score for the QMLE $\bar{\bm \theta}_T$. The resulting asymptotic theory is proved under the $Q$-process law, under which both estimators are consistent and asymptotically normal, while the naive unconditional estimator $\hat{\bm\theta}_T$ remains affected by survival bias. Appendix \ref{app:QvsConditional} explains the relation between the $Q$-process and the original process conditioned on long survival, proving fixed-window convergence and a full-window uniform equivalence statement in the finite-state setting.

A practically useful feature of the present framework is that it suggests a stable latent-variable or EM-type implementation for unmarked births. If birth types are treated as latent variables, then in the conditional expectation step the common Doob-$h$ tilt factor at a given state enters multiplicatively across all birth mechanisms and therefore cancels in the posterior allocation probabilities for the birth labels. This removes the tilt from the relative responsibilities used to split aggregate birth events across mechanisms. However, in an exact EM scheme the maximisation step remains coupled through the compensator terms, since the tilted intensities still depend on the parameter vector via the Doob-$h$ factors. Thus the cancellation is only partial, it simplifies the latent-label update, but does not by itself give a fully decoupled closed-form M-step. Even so, it points to a numerically stable hybrid procedure that may be useful when direct evaluation of the full score is computationally unstable. A detailed algorithmic analysis is left for future work.

The testing problem considered here was deliberately restricted to the one-dimensional case which already addresses the main question of whether a specific higher-order transmission mechanism is present. A natural next step is to treat multi-parameter boundary problems, for example testing the joint presence of several higher-order mechanisms at once. In that setting one expects genuinely non-standard asymptotics, and a possible route is to combine the present survival-conditioned framework with the boundary theory of Self and Liang~\cite{SelfLiang1987}. We do not pursue this here.

Several limitations should also be kept in view. First, we assume continuous observation of the process. Extending the theory to discretely observed data would require handling latent jump times in addition to latent birth types. Second, our main epidemic example is the simplicial SIS model on a complete hypergraph, for which the rates depend only on the current infected count. More general network structures would typically require a higher-dimensional state description, and the one-dimensional reduction exploited here would no longer be available. Third, the asymptotic regime is based on large observation horizons under survival conditioning, and finite-horizon performance must therefore be assessed empirically. 

More broadly, the framework applies beyond epidemic modelling. Any birth-death system with a composite rate structure and an absorbing boundary gives rise to the same basic inferential questions: how to separate competing mechanisms in event space from aggregate observations, and how to account for survival selection when only surviving trajectories are observed. The presented results show that, at least in the finite-state setting considered here, such event-space deconvolution is statistically feasible.

\section*{Acknowledgements}
Marko Lalovic acknowledges financial support from Northeastern University London through a PhD studentship. He also thanks Federico Malizia for discussions during the early stages of this work.

Nicos Georgiou acknowledges partial support from the Dr Perry James (Jim) Browne Research Center at the Department of Mathematics, University of Sussex, and acknowledges financial
support provided by Sapienza University of Rome through the programme Professori Visitatori 2025.

\appendix
\section{Equivalence of survival-conditioned and \texorpdfstring{$Q$}{Q}-process laws}
\label{app:QvsConditional} 
The purpose of this appendix is to justify the use of the Doob $h$-transformed $Q$-process as a time-homogeneous surrogate for the original process conditioned on survival. We compare the
law of the original process conditioned on survival up to a large horizon with the law of the
Doob $h$-transformed Q-process. First we record the standard fixed-window comparison, in
which the sigma-field $\mathcal F_t$ is held fixed while the conditioning horizon $T$ tends to
infinity. We then show that, in the present finite-state setting, the two laws are in fact uniformly equivalent on the full observation window $\mathcal F_T$ for all sufficiently large $T$. For general background on quasi-stationary distributions and $Q$-processes, see~\cite{DarrochSeneta1967,ColletMartinez2013,ChampagnatVillemonais2016}.

Fix an initial state $i \in \{1, \dots, N\}$ and write $\mathbb P_{i,\bm\theta}$ for the law of the original birth-death process started from $X_0 = i$, and $\widetilde{\mathbb P}_{i,\bm\theta}$ for the law of the corresponding $Q$-process started from the same initial state. Both laws are understood on the canonical path space, with natural filtration $\mathcal F_t:=\sigma(X_s:0\le s\le t)$. Let
\be
\mathbb P_{i,\bm\theta}^{(T)} :=
\mathbb P_{i,\bm\theta}(\,\cdot\,\mid \tau_0>T), \qquad \tau_0:=\inf\{ t>0 : X_t = 0\}.
\ee

\begin{proposition}[Fixed-window Radon-Nikodým derivative]
\label{prop:asym-equivalence}
For every fixed $t>0$,
\be\label{eq:RN-derivative}
\left.
\frac{d\mathbb P_{i,\bm\theta}^{(T)}}{d\widetilde{\mathbb P}_{i,\bm\theta}}
\right|_{\mathcal F_t} =
\mathbf 1_{\{\tau_0>t\}}
\frac{h_{\bm\theta}(i)}{h_{\bm\theta}(X_t)}
\frac{\mathbb P_{X_t,\bm\theta}(\tau_0>T-t)}
     {\mathbb P_{i,\bm\theta}(\tau_0>T)}
\,e^{\gamma(\bm\theta)t}.
\ee

Moreover, as $T\to\infty$,
\be\label{eq:RN-to-one}
\left.
\frac{d\mathbb P_{i,\bm\theta}^{(T)}}{d\widetilde{\mathbb P}_{i,\bm\theta}}
\right|_{\mathcal F_t} \longrightarrow 1
\qquad
\widetilde{\mathbb P}_{i,\bm\theta}\text{-a.s.}
\ee
\end{proposition}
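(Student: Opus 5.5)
The proof has two parts: first compute the two Radon--Nikod\'ym derivatives with respect to the original law $\mathbb P_{i,\bm\theta}$ restricted to $\mathcal F_t$, then take their ratio; second, use the Perron--Frobenius asymptotics of survival probabilities to send $T\to\infty$.

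\textbf{Step 1: the $Q$-process density.} Since $Q^+(\bm\theta)\bm h_{\bm\theta}=\gamma(\bm\theta)\bm h_{\bm\theta}$ and $h_{\bm\theta}(0)=0$, the process
\[
D_t:=\mathbf 1_{\{\tau_0>t\}}\frac{h_{\bm\theta}(X_t)}{h_{\bm\theta}(i)}e^{-\gamma(\bm\theta)t}
\]
is a nonnegative mean-one $\mathbb P_{i,\bm\theta}$-martingale. To check this, note that $\mathbb E_{i}[h(X_t)\mathbf 1_{\{\tau_0>t\}}]=(e^{tQ^+}\bm h)_i=e^{\gamma t}h(i)$, and then apply the Markov property. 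I will show that $d\widetilde{\mathbb P}_{i,\bm\theta}|_{\mathcal F_t}=D_t\,d\mathbb P_{i,\bm\theta}|_{\mathcal F_t}$. One way is to verify that the measure defined by $D$ has finite-dimensional distributions with transition semigroup $\widetilde P_t(k,j)=e^{-\gamma t}\frac{h(j)}{h(k)}(e^{tQ^+})_{kj}$. The generator of this semigroup is exactly \eqref{eq:h-transform}: its off-diagonal entries are $\frac{h(j)}{h(k)}q_{kj}$, and its diagonal entries are $q_{kk}-\gamma$, which equals $-\sum_{j\ne k}\widetilde q_{kj}$ by the eigen-equation. On a finite state space the generator determines the law, so this identifies $\widetilde{\mathbb P}_{i,\bm\theta}$.

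\textbf{Step 2: the conditioned density and the ratio.} By the Markov property at time $t$, for $A\in\mathcal F_t$,
\[
\mathbb P_{i,\bm\theta}(A,\tau_0>T)=\mathbb E_{i}\bigl[\mathbf 1_A\mathbf 1_{\{\tau_0>t\}}\mathbb P_{X_t,\bm\theta}(\tau_0>T-t)\bigr],
\]
so the density of $\mathbb P^{(T)}_{i,\bm\theta}$ with respect to $\mathbb P_{i,\bm\theta}$ on $\mathcal F_t$ is $\mathbf 1_{\{\tau_0>t\}}\mathbb P_{X_t}(\tau_0>T-t)/\mathbb P_i(\tau_0>T)$. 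The $Q$-process never hits $0$, so $\widetilde{\mathbb P}_{i,\bm\theta}$ is carried by $\{\tau_0>t\}$, where $D_t>0$. Hence $\mathbb P^{(T)}\ll\widetilde{\mathbb P}$ on $\mathcal F_t$, and dividing the two densities gives \eqref{eq:RN-derivative}.

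\textbf{Step 3: the limit.} Since $Q^+$ is irreducible with a simple dominant eigenvalue,
\[
e^{sQ^+}=e^{\gamma s}\bigl(\bm h\bm v^\top+O(e^{-\delta s})\bigr)
\]
for some spectral gap $\delta>0$, using the normalisation $\bm v^\top\bm h=1$. Consequently
\[
\mathbb P_{k,\bm\theta}(\tau_0>s)=(e^{sQ^+}\mathbf 1)_k=e^{\gamma s}h(k)\,\bigl(\bm v^\top\mathbf 1\bigr)(1+o(1))
\]
uniformly in $k\in\{1,\dots,N\}$. Substituting this into \eqref{eq:RN-derivative} makes the factors $\bm v^\top\mathbf 1$ cancel, leaves $h(X_t)/h(i)$ to cancel with the prefactor, and leaves $e^{\gamma(T-t)}e^{\gamma t}/e^{\gamma T}=1$. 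The indicator equals $1$ $\widetilde{\mathbb P}$-a.s., so the derivative tends to $1$ for every path, which gives \eqref{eq:RN-to-one}.

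The main obstacle is the spectral gap used in Step 3. $Q^+$ need not be diagonalisable, and the other eigenvalues may be complex. Here I would invoke Perron--Frobenius for the nonnegative irreducible matrix $Q^++cI$ (which is even symmetrisable, since we have a birth--death chain, so the spectrum is real and the expansion can be made explicit). Together with Jordan form, this gives that every other eigenvalue has real part strictly less than $\gamma$. That suffices for the uniform asymptotics, because the state space is finite and $h$ is bounded away from zero.
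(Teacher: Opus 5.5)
Your proposal is correct and follows essentially the same route as the paper. Both arguments take the Markov-property density of $\mathbb P^{(T)}_{i,\bm\theta}$ with respect to $\mathbb P_{i,\bm\theta}$ on $\mathcal F_t$, divide it by the Doob $h$-transform density $\mathbf 1_{\{\tau_0>t\}}h_{\bm\theta}(X_t)e^{-\gamma(\bm\theta)t}/h_{\bm\theta}(i)$, and then pass to the limit with the uniform Perron--Frobenius survival asymptotic. The difference is that you supply details the paper only cites as standard: the semigroup identification of the $Q$-process law, the explicit constant $c_{\bm\theta}=\bm v_{\bm\theta}^\top\mathbf 1$, and the spectral-gap justification.
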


\begin{proof}
The identity \eqref{eq:RN-derivative} follows from the Markov property and the definition of the Doob $h$-transform. Indeed, for $A\in\mathcal F_t$,
\be
\mathbb P_{i,\bm\theta}^{(T)}(A) =
\frac{\mathbb E_{i,\bm\theta}\!\left[
\mathbf 1_A\mathbf 1_{\{\tau_0>t\}}
\mathbb P_{X_t,\bm\theta}(\tau_0>T-t)
\right]}
{\mathbb P_{i,\bm\theta}(\tau_0>T)}.
\ee

On the other hand, the Doob $h$-transform gives the standard change-of-measure identity
\be
\left.
\frac{d\widetilde{\mathbb P}_{i,\bm\theta}}{d\mathbb P_{i,\bm\theta}}
\right|_{\mathcal F_t} =
\mathbf 1_{\{\tau_0>t\}}
\frac{h_{\bm\theta}(X_t)}{h_{\bm\theta}(i)}
e^{-\gamma(\bm\theta)t}.
\ee
Combining these two identities yields \eqref{eq:RN-derivative}.

For the limit, standard Perron--Frobenius theory for the killed generator gives the survival asymptotic
\be \label{A.6}
\mathbb P_{x,\bm\theta}(\tau_0>s) =
c_{\bm\theta}\,h_{\bm\theta}(x)e^{\gamma(\bm\theta)s}(1+o(1)),
\qquad s\to\infty,
\ee
uniformly in $x\in\{1,\dots,N\}$; see \cite{DarrochSeneta1967,ColletMartinez2013}. Substituting this into \eqref{eq:RN-derivative} yields \eqref{eq:RN-to-one}.
\end{proof}

\begin{corollary}[Fixed-window contiguity]
\label{cor:contiguity}
For every fixed $t>0$, the family $\{\mathbb P_{i,\bm\theta}^{(T)}|_{\mathcal F_t}:T>t\}$ is contiguous with respect to $\widetilde{\mathbb P}_{i,\bm\theta}|_{\mathcal F_t}$. Equivalently, if $A_T\in\mathcal F_t$ and $\widetilde{\mathbb P}_{i,\bm\theta}(A_T)\to0$, then
\be
\mathbb P_{i,\bm\theta}^{(T)}(A_T) \to 0
\qquad\text{as }T\to\infty.
\ee
\end{corollary}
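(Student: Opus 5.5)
We obtain contiguity directly from the Radon--Nikodým representation in Proposition~\ref{prop:asym-equivalence}. We show that the likelihood ratios
\[
L_T:=\left.\frac{d\mathbb P_{i,\bm\theta}^{(T)}}{d\widetilde{\mathbb P}_{i,\bm\theta}}\right|_{\mathcal F_t}
\]
are \emph{uniformly bounded} in $T$ for $T$ large. Contiguity then follows immediately:
\[
\mathbb P^{(T)}_{i,\bm\theta}(A_T)=\widetilde{\mathbb E}_{i,\bm\theta}[L_T\mathbf 1_{A_T}]\le C\,\widetilde{\mathbb P}_{i,\bm\theta}(A_T)\to0 .
\]
Before using this, we check that $\mathbb P^{(T)}_{i,\bm\theta}|_{\mathcal F_t}$ is absolutely continuous with respect to $\widetilde{\mathbb P}_{i,\bm\theta}|_{\mathcal F_t}$. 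Both measures live on paths that do not hit $0$ before $t$. The Doob change of measure has density $\mathbf 1_{\{\tau_0>t\}}h_{\bm\theta}(X_t)e^{-\gamma t}/h_{\bm\theta}(i)$ with respect to $\mathbb P_{i,\bm\theta}$, and this density is strictly positive on $\{\tau_0>t\}$ because $h_{\bm\theta}>0$ on $\{1,\dots,N\}$. So on $\mathcal F_t\cap\{\tau_0>t\}$ the two laws are mutually absolutely continuous. This is what makes the formula \eqref{eq:RN-derivative} a genuine density.

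For the uniform bound, write $h_{\min}=\min_{x\ge1}h_{\bm\theta}(x)>0$ and $h_{\max}=\max_x h_{\bm\theta}(x)$, and recall $\gamma(\bm\theta)<0$, so that $e^{\gamma t}\le 1$. Then
\[
L_T\le \frac{h_{\max}}{h_{\min}}\,\sup_{x\in\{1,\dots,N\}}\frac{\mathbb P_{x,\bm\theta}(\tau_0>T-t)}{\mathbb P_{i,\bm\theta}(\tau_0>T)}.
\]
The survival asymptotic \eqref{A.6}, holding uniformly in $x$, gives
\[
\frac{\mathbb P_{x,\bm\theta}(\tau_0>T-t)}{\mathbb P_{i,\bm\theta}(\tau_0>T)}=\frac{h_{\bm\theta}(x)}{h_{\bm\theta}(i)}e^{-\gamma t}(1+o(1))
\]
uniformly in $x$. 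Hence for $T\ge T_1$ this ratio is at most $2h_{\max}e^{-\gamma t}/h_{\min}$. This yields a constant $C=C(t,\bm\theta)$ with $L_T\le C$ for all $T\ge T_1$. The finitely many $T\in(t,T_1)$ play no role in a statement about $T\to\infty$.

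The only step needing care is the uniformity in $x$ of \eqref{A.6}, which makes the $\sup_x$ harmless. Since the state space is finite, pointwise convergence for each of the $N$ states already implies uniformity, so this obstacle is mild. Alternatively, one could avoid boundedness entirely: use \eqref{eq:RN-to-one} together with $\widetilde{\mathbb E}[L_T]=1$ and Scheffé's lemma to get $L_T\to1$ in $L^1(\widetilde{\mathbb P})$, hence total variation convergence. Then $|\mathbb P^{(T)}(A_T)-\widetilde{\mathbb P}(A_T)|\to0$, which gives the claim as well (and in fact mutual contiguity). I would present the bounded-ratio argument as the main proof and mention the Scheffé route as a remark.
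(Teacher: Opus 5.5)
Your proposal is correct and is essentially the paper's argument: it bounds the fixed-window Radon--Nikod\'ym derivative from Proposition~\ref{prop:asym-equivalence} by a constant $C_t$ for all large $T$, using finiteness of the state space and the state-uniform survival asymptotic \eqref{A.6}, and then concludes $\mathbb P^{(T)}_{i,\bm\theta}(A_T)\le C_t\,\widetilde{\mathbb P}_{i,\bm\theta}(A_T)$. Your explicit constants, the absolute-continuity check and the Scheff\'e remark (which gives total-variation convergence and hence mutual contiguity) are all valid; the only slip is calling $T\in(t,T_1)$ ``finitely many'' values, since it is an interval, but this is harmless because only large $T$ matters.
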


\begin{proof}
By Proposition~\ref{prop:asym-equivalence},
\[
\mathbb P_{i,\bm\theta}^{(T)}(A_T) =
\widetilde{\mathbb E}_{i,\bm\theta}\!\left[
\mathbf 1_{A_T}
\left.
\frac{d\mathbb P_{i,\bm\theta}^{(T)}}{d\widetilde{\mathbb P}_{i,\bm\theta}}
\right|_{\mathcal F_t}
\right].
\]
Moreover, the Radon--Nikodým derivative is uniformly bounded for all sufficiently large $T$ because the state space is finite and the survival asymptotic used above is uniform in the state variable. That is, there exists a constant $C_t<\infty$ such that, for all sufficiently large $T$,
\[
\left.
\frac{d\mathbb P_{i,\bm\theta}^{(T)}}{d\widetilde{\mathbb P}_{i,\bm\theta}}
\right|_{\mathcal F_t}
\le C_t,
\qquad
\widetilde{\mathbb P}_{i,\bm\theta}\text{-a.s.}
\]
Hence, for all sufficiently large $T$,
\[
\mathbb P_{i,\bm\theta}^{(T)}(A_T)
\le C_t\,\widetilde{\mathbb P}_{i,\bm\theta}(A_T).
\]
Therefore $\mathbb P_{i,\bm\theta}^{(T)}(A_T)\to0$ whenever
$\widetilde{\mathbb P}_{i,\bm\theta}(A_T)\to0$.
\end{proof}

\medskip

The fixed-window statement above explains why the $Q$-process captures the local dynamics of
the original process conditioned on long survival. In the present finite-state setting one can
also strengthen this relation on the full observed window $\mathcal F_T$. The next proposition
records the corresponding Radon--Nikodým derivative and shows that the survival-conditioned
law and the $Q$-process law are uniformly equivalent on $\mathcal F_T$ for all sufficiently large $T$. We emphasise, however, that the asymptotic results in Section~\ref{sec:asymptotic} are still proved under the $Q$-process law $\widetilde{\mathbb P}_{i,\bm\theta}$.

\begin{proposition}[Full-window Radon--Nikodým derivative and uniform equivalence]
\label{prop:A3}
For every $T>0$,
\be
\frac{d\P^{(T)}_{i,\bm\theta}}{d\widetilde \P_{i,\bm \theta}}\Big|_{\mathcal F_T}
=
1_{\{\tau_0>T\}}
\frac{h_{\bm\theta}(i)}{h_{\bm\theta}(X_T)}
\frac{e^{\gamma(\bm\theta)T}}{\P_{i,\bm\theta}(\tau_0>T)}.
\label{A.9}
\ee
Moreover, there exist constants $0<c_1\le c_2<\infty$ and $T_0<\infty$, depending on
$i$ and $\bm\theta$, such that for all $T\ge T_0$,
\be
c_1
\le
\frac{d\P^{(T)}_{i,\bm\theta}}{d\widetilde \P_{i,\bm \theta}}\Big|_{\mathcal F_T}
\le
c_2,
\qquad
\widetilde{\mathbb P}_{i,\bm\theta}\text{-a.s.}
\label{A.10}
\ee
In particular, for every sequence of events $A_T\in\mathcal F_T$,
\be
\widetilde \P_{i,\bm \theta}(A_T)\to 0
\quad\Longleftrightarrow\quad
\P^{(T)}_{i,\bm \theta}(A_T)\to 0.
\label{A.11}
\ee
\end{proposition}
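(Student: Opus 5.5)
The plan is to obtain \eqref{A.9} exactly as in the proof of Proposition~\ref{prop:asym-equivalence}, taking $t=T$, and then to bound the resulting expression using the survival asymptotic \eqref{A.6}.

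\textbf{Step 1: the identity \eqref{A.9}.} For $A\in\mathcal F_T$ we have $\mathbb P^{(T)}_{i,\bm\theta}(A)=\mathbb E_{i,\bm\theta}[\mathbf 1_A\mathbf 1_{\{\tau_0>T\}}]/\mathbb P_{i,\bm\theta}(\tau_0>T)$. Equivalently, this is \eqref{eq:RN-derivative} at $t=T$, since $\mathbb P_{X_T,\bm\theta}(\tau_0>0)=1$ on $\{\tau_0>T\}$. The Doob change of measure $d\widetilde{\mathbb P}_{i,\bm\theta}/d\mathbb P_{i,\bm\theta}|_{\mathcal F_T}=\mathbf 1_{\{\tau_0>T\}}h_{\bm\theta}(X_T)e^{-\gamma(\bm\theta)T}/h_{\bm\theta}(i)$ is strictly positive on $\{\tau_0>T\}$, because $h_{\bm\theta}>0$ on $\{1,\dots,N\}$. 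So on that event the two measures are mutually absolutely continuous, and dividing gives \eqref{A.9}. I would also note that $\widetilde{\mathbb P}_{i,\bm\theta}(\tau_0>T)=1$, since the $Q$-process never visits $0$. Hence the indicator equals $1$ $\widetilde{\mathbb P}$-a.s., and the density is a genuine two-sided object on $\mathcal F_T$.

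\textbf{Step 2: uniform bounds \eqref{A.10}.} Put $h_{\min}=\min_k h_{\bm\theta}(k)>0$ and $h_{\max}=\max_k h_{\bm\theta}(k)$, which exist because the state space is finite. Then $\widetilde{\mathbb P}$-a.s. we have
\[
h_{\bm\theta}(i)/h_{\max}\le h_{\bm\theta}(i)/h_{\bm\theta}(X_T)\le h_{\bm\theta}(i)/h_{\min}.
\]
For the remaining factor, \eqref{A.6} gives $e^{\gamma(\bm\theta)T}/\mathbb P_{i,\bm\theta}(\tau_0>T)\to 1/(c_{\bm\theta}h_{\bm\theta}(i))\in(0,\infty)$. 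So there is $T_0$ such that for $T\ge T_0$ this ratio lies in $[\tfrac12,2]\cdot(c_{\bm\theta}h_{\bm\theta}(i))^{-1}$. Multiplying the two bounds gives $c_1=1/(2c_{\bm\theta}h_{\max})$ and $c_2=2/(c_{\bm\theta}h_{\min})$.

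The one point to verify is that $c_{\bm\theta}>0$ in \eqref{A.6}. I would check this by spectral decomposition: $\mathbb P_{x}(\tau_0>s)=(e^{sQ^+}\mathbf 1)_x=e^{\gamma s}h(x)\,\bm v^\top\mathbf 1\,(1+o(1))$. Here the simple Perron eigenvalue strictly dominates the real parts of the other eigenvalues by Assumption~\ref{ass:irred_and_stab}, so $c_{\bm\theta}=\bm v_{\bm\theta}^\top\mathbf 1>0$ because $\bm v_{\bm\theta}$ is strictly positive.

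\textbf{Step 3: \eqref{A.11}.} Writing $D_T$ for the density in \eqref{A.9}, for $T\ge T_0$ we get
\[
\mathbb P^{(T)}_{i,\bm\theta}(A_T)=\widetilde{\mathbb E}_{i,\bm\theta}[\mathbf 1_{A_T}D_T]\in\bigl[c_1\widetilde{\mathbb P}_{i,\bm\theta}(A_T),\,c_2\widetilde{\mathbb P}_{i,\bm\theta}(A_T)\bigr],
\]
and both directions of \eqref{A.11} follow at once. For the reverse direction, $\mathbb P^{(T)}$ is absolutely continuous with respect to $\widetilde{\mathbb P}$ on $\mathcal F_T$ with density $D_T$. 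This holds because $\mathbb P^{(T)}$ is supported on $\{\tau_0>T\}$, where the Doob density is positive.

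The main obstacle is Step 2. Unlike the fixed-window case, here the horizon and the window grow together, so one cannot appeal to convergence of the density to $1$. The argument works only because the density depends on the path solely through the endpoint ratio $h(i)/h(X_T)$, which is bounded by finiteness and positivity of $h$, together with the deterministic normalisation $e^{\gamma T}/\mathbb P_i(\tau_0>T)$, which is controlled by the Perron–Frobenius asymptotic with $c_{\bm\theta}>0$. I would take care to justify that asymptotic, in particular the spectral gap and positivity of $\bm v_{\bm\theta}^\top\mathbf 1$.
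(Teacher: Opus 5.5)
Your proposal is correct and follows essentially the same route as the paper: you obtain \eqref{A.9} by setting $t=T$ in \eqref{eq:RN-derivative}, bound the endpoint ratio $h_{\bm\theta}(i)/h_{\bm\theta}(X_T)$ using finiteness of the state space and positivity of $h_{\bm\theta}$, control $e^{\gamma(\bm\theta)T}/\mathbb P_{i,\bm\theta}(\tau_0>T)$ through the survival asymptotic \eqref{A.6}, and deduce \eqref{A.11} from the two-sided bound $c_1\widetilde{\mathbb P}_{i,\bm\theta}(A_T)\le\mathbb P^{(T)}_{i,\bm\theta}(A_T)\le c_2\widetilde{\mathbb P}_{i,\bm\theta}(A_T)$. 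The explicit constants and the spectral-gap check that $c_{\bm\theta}=\bm v_{\bm\theta}^\top\mathbf 1>0$ add detail the paper leaves implicit (it takes $c_{\bm\theta}>0$ from the cited Perron--Frobenius asymptotic), but they do not change the approach.
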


\begin{proof}
The identity \eqref{A.9} is obtained from Proposition \ref{prop:asym-equivalence} by setting $t=T$. Since
$\P_{X_T,\bm \theta}(\tau_0>0)=1$ on $\{\tau_0>T\}$, formula \eqref{eq:RN-derivative} gives
\[
\frac{d\P^{(T)}_{i,\bm \theta}}{d\widetilde \P_{i,\bm \theta}}\Big|_{\mathcal F_T}
=
1_{\{\tau_0>T\}}
\frac{h_{\bm\theta}(i)}{h_{\bm\theta}(X_T)}
\frac{e^{\gamma(\bm \theta)T}}{\P_{i,\bm \theta}(\tau_0>T)}.
\]
To prove \eqref{A.10}, use the survival asymptotic \eqref{A.6} with $x=i$:
\[
\P_{i,\bm \theta}(\tau_0>T)
=
c_{\bm\theta} h_{\bm\theta}(i)e^{\gamma(\bm \theta)T}(1+o(1)),
\qquad T\to\infty.
\]
Hence
\[
\frac{e^{\gamma(\bm \theta)T}}{\P_{i,\bm \theta}(\tau_0>T)}
=
\frac{1+o(1)}{c_{\bm\theta}h_{\bm\theta}(i)}.
\]
Substituting into \eqref{A.9} yields
\[
\frac{d\P^{(T)}_{i,\bm \theta}}{d\widetilde \P_{i,\bm \theta}}\Big|_{\mathcal F_T}
=
1_{\{\tau_0>T\}}
\frac{1+o(1)}{c_{\bm\theta} h_{\bm\theta}(X_T)}.
\]
Since the state space $\{1,\dots,N\}$ is finite and $h_{\bm\theta}(x)>0$ for every
$x\in\{1,\dots,N\}$, the quantity $1/h_{\bm\theta}(X_T)$ is bounded above and below by positive constants uniformly in $T$. Therefore the Radon--Nikodým derivative is bounded above and below by positive constants for all sufficiently large $T$, which proves \eqref{A.10}.

Finally, \eqref{A.11} follows immediately from \eqref{A.10}:
\[
\P^{(T)}_{i,\bm \theta}(A_T)
=
\widetilde \E_{i,\bm \theta}\!\left[
1_{A_T}
\frac{d\P^{(T)}_{i,\bm\theta}}{d\widetilde \P_{i,\bm \theta}}\Big|_{\mathcal F_T}
\right].
\]
The upper and lower bounds in \eqref{A.10} imply
\[
c_1\,\widetilde \P_{i,\bm\theta}(A_T)
\le
\P^{(T)}_{i,\bm \theta}(A_T)
\le
c_2\,\widetilde \P_{i,\bm \theta}(A_T)
\]
for all sufficiently large $T$, and the equivalence follows.
\end{proof}

\begin{sloppypar}
Proposition \ref{prop:A3} shows that, in the finite-state setting considered here, the survival-conditioned law and the $Q$-process law are not only asymptotically equivalent on each fixed window $\mathcal F_t$, but are in fact uniformly equivalent on the full observation window $\mathcal F_T$ for all sufficiently large $T$. This strengthens the interpretation of the $Q$-process as a mathematically tractable surrogate for long-surviving trajectories. The limit theorems in Section \ref{sec:asymptotic} are nevertheless stated and proved under the $Q$-process law.
\end{sloppypar}

\section{Proofs of the asymptotic theorems}\label{sec:proofs}
We work throughout under the $Q$-process law $\widetilde{\mathbb P}_{\bm\theta_0}$. Write $\widetilde X=(\widetilde X_t)_{t \ge 0}$ for the corresponding ergodic birth-death chain on $\{1,\dots,N\}$. The notation for the full score $U_T(\bm\theta)$, the working score $\bar U_T(\bm\theta)$, and the information matrices $\mathcal I(\bm\theta)$, $\mathcal J(\bm\theta)$, and $\mathcal H(\bm\theta)$ is that of Section~\ref{sec:conditional-lik}.

Under the $Q$-process the transformed death rate from state $1$ vanishes:
\[
\widetilde\mu_1(\bm\theta) =
\mu\,r(1)\,\frac{h_{\bm\theta}(0)}{h_{\bm\theta}(1)} = 0.
\]
Hence there are no death jumps from state 1, and all terms involving $\log \widetilde\mu_k(\bm\theta)$ and its derivatives are relevant only for $k = 2,\dots,N$. In what follows, all death-side smoothness bounds, score contributions, and statewise death counts are therefore understood on the set of states $\{2,\dots,N\}$.

Fix a compact neighbourhood $U\subset\Theta$ of $\bm\theta_0$. Since the entries of $Q^+(\bm\theta)$ depend affinely on $\bm\theta$ and, by Assumption~\ref{ass:irred_and_stab}, the principal eigenvalue is simple, standard analytic perturbation theory implies that, after possibly shrinking $U$, the maps $\bm\theta\mapsto \gamma(\bm\theta)$ and $\bm\theta\mapsto \bm h_{\bm\theta}$, $\bm\theta\mapsto \bm v_{\bm\theta}$ may be chosen $C^\infty$ on $U$ under the normalization $\bm v_{\bm\theta}^{\top}\bm h_{\bm\theta}=1$; see Kato~\cite{Kato1995}. In particular, $\bm\theta\mapsto h_{\bm\theta}(k)$ is $C^\infty$ for each $k\in\{1,\dots,N\}$. Shrinking $U$ further if necessary, we may assume that $\widetilde\lambda_k(\bm\theta)$ is bounded away from zero for $k=1,\dots,N-1$ and that $\widetilde\mu_k(\bm\theta)$ is bounded away from zero for $k=2,\dots,N$, uniformly in $\bm\theta\in U$.

Since the state space is finite, all partial derivatives up to order three of $\widetilde\lambda_k(\bm\theta)$ and $\log \widetilde\lambda_k(\bm\theta)$ for $k=1,\dots,N-1$, and of $\widetilde\mu_k(\bm\theta)$ and $\log \widetilde\mu_k(\bm\theta)$ for $k=2,\dots,N$, are uniformly bounded in $(k,\bm\theta)$ on the corresponding index sets. No derivative of $\log \widetilde\mu_1(\bm\theta)$ is needed, since $\widetilde\mu_1(\bm\theta)=0$ identically and the $Q$-process has no death transition from state $1$.

For $k\in\{1,\dots,N\}$, define the statewise waiting times
\begin{equation}\label{eq:statewise-times-proof}
T_k(T) := \int_0^T \mathbf 1_{\{\widetilde X_t=k\}}\,dt.
\end{equation}
For birth jumps, define for $k=1,\dots,N-1$
\begin{equation}\label{eq:statewise-births-proof}
N_k^{(+)}(T) := \int_0^T \mathbf 1_{\{\widetilde X_{t-}=k\}}\,dN_t^{(+)}.
\end{equation}
For death jumps, define for $k=2,\dots,N$
\begin{equation}\label{eq:statewise-deaths-proof}
N_k^{(-)}(T) := \int_0^T \mathbf 1_{\{\widetilde X_{t-}=k\}}\,dN_t^{(-)}.
\end{equation}

By ergodicity of the $Q$-process and the law of large numbers for counting
processes,
\begin{align}
\frac{T_k(T)}{T} &\convas \widetilde\pi_{\bm\theta_0}(k),
\qquad k=1,\dots,N,
\label{eq:statewise-time-lln-proof}\\
\frac{N_k^{(+)}(T)}{T} &\convas
\widetilde\pi_{\bm\theta_0}(k)\widetilde\lambda_k(\bm\theta_0),
\qquad k=1,\dots,N-1,
\label{eq:statewise-birth-lln-proof}\\
\frac{N_k^{(-)}(T)}{T} &\convas
\widetilde\pi_{\bm\theta_0}(k)\widetilde\mu_k(\bm\theta_0),
\qquad k=2,\dots,N.
\label{eq:statewise-death-lln-proof}
\end{align}

These convergences provide the finite-state analogue of the normalized law-of-large-numbers and information conditions used in the large-sample likelihood and estimating-equation theory for counting processes. In the present finite-state setting, jumps are bounded, so the corresponding Lindeberg condition is automatic. Assumption~\ref{ass:ident-mle} gives the positive-definite limit matrix needed for the MLE, while Assumption~\ref{ass:ident-qmle} provides the corresponding non-degeneracy for the QMLE.

\subsection{Consistency of the conditional MLE and the QMLE} 
Before proving the asymptotic normality statements, we record the corresponding consistency arguments in the present finite-state setting. The general counting-process background is provided by \cite[Chapter VI]{ABGK1993}, but here the proofs simplify substantially because the state space is finite and the transformed process is ergodic.

The compact set $U$ fixed above is used only to guarantee uniform smoothness and boundedness of the transformed intensities and score weights. In the consistency arguments below, we work on compact subsets $K\subset U$ on which the maximizer or root is constrained to lie.

\begin{lemma}[Uniform law of large numbers on compact parameter sets]\label{lem:uniform-lln-B}
Let $K \subset U$ be compact. Then, under $\widetilde \P_{\bm \theta_0}$,
\[
\sup_{\bm \theta\in K}\left|
\frac{1}{T}\widetilde\ell_T(\bm\theta)-\Lambda(\bm \theta)
\right|\xrightarrow[T\to\infty]{a.s.}0,
\]
where
\begin{align}
\Lambda(\bm \theta)
&:=
\sum_{k=1}^{N-1}\widetilde\pi_{\bm \theta_0}(k)\widetilde\lambda_k(\bm \theta_0)
\log \widetilde\lambda_k(\bm \theta)
+\sum_{k=2}^{N}\widetilde\pi_{\bm \theta_0}(k)\widetilde\mu_k(\bm \theta_0)
\log \widetilde\mu_k(\bm \theta) \notag\\
&\qquad
-\sum_{k=1}^{N}\widetilde\pi_{\bm \theta_0}(k)
\bigl(\widetilde\lambda_k(\bm \theta)+\widetilde\mu_k(\bm \theta)\bigr).
\label{eq:Lambda-limit}
\end{align}
Moreover,
\[
\sup_{\bm \theta\in K}\left\|
\frac{1}{T}\bar U_T(\bm \theta)-\Psi(\bm \theta)
\right\|\xrightarrow[T\to\infty]{a.s.}0,
\]
where $\Psi(\bm \theta)=\bigl(\Psi^{(\bm\beta)}(\bm \theta),\Psi^{(\mu)}(\bm \theta)\bigr)$ is given by
\begin{align}
\Psi^{(\bm \beta)}(\bm \theta)
&=
\sum_{k=1}^{N-1}\widetilde\pi_{\bm \theta_0}(k)\,
\bar g_{\bm\beta}(k;\bm \theta)\,
\bigl(\widetilde\lambda_k(\bm \theta_0)-\widetilde\lambda_k(\bm \theta)\bigr),\label{eq:Psi-beta}\\
\Psi^{(\mu)}(\bm \theta)
&=
\sum_{k=2}^{N}\widetilde\pi_{\bm \theta_0}(k)\,
\bar g_\mu(k;\bm \theta)\,
\bigl(\widetilde\mu_k(\bm \theta_0)-\widetilde\mu_k(\bm \theta)\bigr).\label{eq:Psi-mu}
\end{align}
\end{lemma}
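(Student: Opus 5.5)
The plan is to exploit the finite-state structure: both $\widetilde\ell_T(\bm\theta)$ and $\bar U_T(\bm\theta)$ can be written as finite sums of \emph{data statistics} multiplied by \emph{deterministic, parameter-dependent coefficients}. Once this is done, the uniform convergence reduces to the pointwise almost-sure limits \eqref{eq:statewise-time-lln-proof}--\eqref{eq:statewise-death-lln-proof}. These limits are available simultaneously on a single event of probability one, because there are only finitely many of them.

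\textbf{The normalized log-likelihood.} Since the integrands in \eqref{eq:cond-loglik} depend on the path only through the current state, I decompose by state:
\[
\widetilde\ell_T(\bm\theta)=\sum_{k=1}^{N-1}N_k^{(+)}(T)\log\widetilde\lambda_k(\bm\theta)+\sum_{k=2}^{N}N_k^{(-)}(T)\log\widetilde\mu_k(\bm\theta)-\sum_{k=1}^{N}T_k(T)\bigl(\widetilde\lambda_k(\bm\theta)+\widetilde\mu_k(\bm\theta)\bigr).
\]
Here $\widetilde\lambda_N\equiv0$ and $\widetilde\mu_1\equiv0$. No death jumps occur from state $1$, so the death log term only ranges over $k\ge2$; this is consistent with the convention adopted above. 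Subtracting $T\Lambda(\bm\theta)$ gives the bound
\[
\sup_{\bm\theta\in K}\Bigl|\tfrac1T\widetilde\ell_T(\bm\theta)-\Lambda(\bm\theta)\Bigr|\le\sum_k\Bigl|\tfrac{N_k^{(+)}(T)}{T}-\widetilde\pi_{\bm\theta_0}(k)\widetilde\lambda_k(\bm\theta_0)\Bigr|\sup_K|\log\widetilde\lambda_k|+\cdots,
\]
with analogous terms for the deaths and for the occupation times. Each supremum over $K$ is finite. The intensities $\widetilde\lambda_k$ and $\widetilde\mu_k$ are continuous, indeed $C^\infty$, on $U\supset K$ by the Kato perturbation argument recorded above. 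They are also bounded away from zero on $U$, so their logarithms are bounded on the compact set $K$. Each data factor tends to zero almost surely by \eqref{eq:statewise-time-lln-proof}--\eqref{eq:statewise-death-lln-proof}. Since there are finitely many terms, the first claim follows.

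\textbf{The working score.} I write $dM_t^{(\pm)}$ at a generic $\bm\theta$ as $dN_t^{(\pm)}$ minus the tilted intensity at $\bm\theta$. This gives
\[
\bar U_T^{(\bm\beta)}(\bm\theta)=\sum_{k=1}^{N-1}\bar g_{\bm\beta}(k;\bm\theta)\bigl(N_k^{(+)}(T)-\widetilde\lambda_k(\bm\theta)T_k(T)\bigr),
\]
and similarly for $\bar U_T^{(\mu)}(\bm\theta)$ with weights $1/\mu$ over $k=2,\dots,N$. Dividing by $T$, the pointwise limit is
\[
\sum_k\bar g_{\bm\beta}(k;\bm\theta)\,\widetilde\pi_{\bm\theta_0}(k)\bigl(\widetilde\lambda_k(\bm\theta_0)-\widetilde\lambda_k(\bm\theta)\bigr)=\Psi^{(\bm\beta)}(\bm\theta),
\]
as in \eqref{eq:Psi-beta}, and likewise for \eqref{eq:Psi-mu}. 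Uniformity again comes from the same finite-sum bound. It requires $\sup_K\|\bar g_{\bm\beta}(k;\cdot)\|<\infty$, which holds because $\bm\beta^\top\bm f(k)$ is continuous and positive on $K\subset(0,\infty)^{K+1}$. It also requires $\sup_K 1/\mu<\infty$, since $\mu$ is bounded below on $K$, together with the boundedness of $\widetilde\lambda_k$ and $\widetilde\mu_k$ on $K$.

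\textbf{Main obstacle.} The delicate point is not the uniformity, which is essentially free here, but the almost-sure statewise ergodic limits. If I need to justify them, I would proceed as follows. The occupation-time limit follows from the ergodic theorem for the positive recurrent finite chain with invariant law \eqref{eq:qsd}. For the jump counts, I write $N_k^{(+)}(T)=\int_0^T\mathbf 1_{\{\widetilde X_{t-}=k\}}\widetilde\lambda_k(\bm\theta_0)\,dt+\text{martingale}$. The martingale part has predictable variation of order at most $T$, so a strong law for locally square-integrable martingales (Lenglart, or $\sum$-type arguments) makes it $o(T)$ almost surely. I also need to check the boundary conventions: $\bar g_{\bm\beta}$ is only needed for $k\le N-1$, because $\bm f(N)=0$ would make it undefined there.
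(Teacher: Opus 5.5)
Your proposal is correct and follows the paper's proof essentially step by step. You use the same statewise decomposition of $\widetilde\ell_T$ and $\bar U_T$ into finitely many data statistics times deterministic coefficients that are bounded on $K$, and then invoke the statewise almost-sure limits \eqref{eq:statewise-time-lln-proof}--\eqref{eq:statewise-death-lln-proof}. Your sketch of why those limits hold (ergodic theorem for the occupation times, and a martingale strong law for the compensated jump counts with $\langle M\rangle_T=O(T)$) goes slightly beyond the paper, which simply asserts them.
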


\begin{proof}
By the statewise decomposition of the likelihood,
\begin{align*}
\widetilde\ell_T(\bm\theta)
&=
\sum_{k=1}^{N-1}N_k^{(+)}(T)\log\widetilde\lambda_k(\bm\theta)
+\sum_{k=2}^{N}N_k^{(-)}(T)\log\widetilde\mu_k(\bm\theta)
-\sum_{k=1}^{N}T_k(T)\bigl(\widetilde\lambda_k(\bm\theta)+\widetilde\mu_k(\bm\theta)\bigr).
\end{align*}
Hence
\begin{align*}
\frac1T\widetilde\ell_T(\bm\theta)-\Lambda(\bm\theta)
&=
\sum_{k=1}^{N-1}
\left(\frac{N_k^{(+)}(T)}{T}-\widetilde\pi_{\bm \theta_0}(k)\widetilde\lambda_k(\bm \theta_0)\right)
\log\widetilde\lambda_k(\bm \theta)\\
&\phantom{xxxxxxx}+
\sum_{k=2}^{N}
\left(\frac{N_k^{(-)}(T)}{T}-\widetilde\pi_{\bm \theta_0}(k)\widetilde\mu_k(\bm \theta_0)\right)
\log\widetilde\mu_k(\bm \theta)\\
&\phantom{xxxxxxxxxxxxx}-
\sum_{k=1}^{N}
\left(\frac{T_k(T)}{T}-\widetilde\pi_{\bm \theta_0}(k)\right)
\bigl(\widetilde\lambda_k(\bm \theta)+\widetilde\mu_k(\bm \theta)\bigr).
\end{align*}
On the compact set $K$, the functions
$\bm\theta\mapsto \widetilde\lambda_k(\bm\theta)$ and
$\bm\theta\mapsto \widetilde\mu_k(\bm\theta)$ are bounded by continuity, while
$\bm\theta\mapsto \log \widetilde\lambda_k(\bm\theta)$ for $k=1,\dots,N-1$ and
$\bm\theta\mapsto \log \widetilde\mu_k(\bm\theta)$ for $k=2,\dots,N$ are bounded, because the corresponding intensities are bounded away from $0$ on $K$. Therefore, taking the supremum over $\bm \theta\in K$ and using
\eqref{eq:statewise-time-lln-proof}--\eqref{eq:statewise-death-lln-proof}, we obtain
\[
\sup_{\bm\theta\in K}\left|
\frac1T\widetilde\ell_T(\bm\theta)-\Lambda(\bm\theta)
\right|\xrightarrow[T\to\infty]{a.s.}0.
\]

Similarly, from \eqref{eq:working-score-vec},
\begin{align*}
\frac1T\bar U_T^{(\bm\beta)}(\bm\theta)
&= \sum_{k=1}^{N-1}\bar g_{\bm\beta}(k;\bm\theta)\frac{N_k^{(+)}(T)}{T}
-\sum_{k=1}^{N-1}\bar g_{\bm\beta}(k;\bm\theta)\widetilde\lambda_k(\bm\theta)\frac{T_k(T)}{T},\\
\frac1T\bar U_T^{(\mu)}(\bm\theta)
&=
\sum_{k=2}^{N}\bar g_\mu(k;\bm\theta)\frac{N_k^{(-)}(T)}{T}
-\sum_{k=2}^{N}\bar g_\mu(k;\bm \theta)\widetilde\mu_k(\bm \theta)\frac{T_k(T)}{T}.
\end{align*}
Since $\bar g_{\bm\beta}(\cdot;\bm\theta)$ and $\bar g_\mu(\cdot;\bm\theta)$ are continuous in
$\theta$ and bounded on $K$, another application of \eqref{eq:statewise-time-lln-proof}--\eqref{eq:statewise-death-lln-proof} yields the
uniform convergence of $T^{-1}\bar U_T(\bm\theta)$ to $\Psi(\bm\theta)$.
\end{proof}

\begin{proposition}[Consistency of the conditional MLE]\label{prop:consistency-mle-B}
Suppose that Assumptions~\ref{ass:irred_and_stab} and
\ref{ass:identifiability} hold. Let $K \subset U$ be a compact neighbourhood of $\bm\theta_0$, and let $\widetilde{\bm \theta}_T$ be any maximiser of $\widetilde\ell_T$ over $K$. Then we have the convergence in probability
\[
\widetilde{\bm \theta}_T \xrightarrow[T\to\infty]{P} \bm\theta_0,
\quad\text{under }\widetilde \P_{\bm\theta_0}.
\]
\end{proposition}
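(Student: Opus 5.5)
The proof follows the standard argmax (M-estimator) consistency route, using Lemma~\ref{lem:uniform-lln-B} as the main input. It has three steps: show that $\Lambda$ is continuous on $K$, show that $\bm\theta_0$ is its unique maximiser on $K$, and then conclude via a well-separated maximum.

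\textbf{Continuity of $\Lambda$.} Continuity on $K\subset U$ is immediate. On $U$, the transformed intensities $\widetilde\lambda_k(\bm\theta)$ ($k=1,\dots,N-1$) and $\widetilde\mu_k(\bm\theta)$ ($k=2,\dots,N$) are $C^\infty$ and bounded away from zero, so their logarithms are continuous. For $k=1$ the death term is identically zero and contributes nothing to $\Lambda$.

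\textbf{$\bm\theta_0$ is the unique maximiser.} This is the key step. Write
\[
\Lambda(\bm\theta)-\Lambda(\bm\theta_0)
=\sum_{k=1}^{N-1}\widetilde\pi_{\bm\theta_0}(k)\,\phi\bigl(\widetilde\lambda_k(\bm\theta_0),\widetilde\lambda_k(\bm\theta)\bigr)
+\sum_{k=2}^{N}\widetilde\pi_{\bm\theta_0}(k)\,\phi\bigl(\widetilde\mu_k(\bm\theta_0),\widetilde\mu_k(\bm\theta)\bigr),
\]
where $\phi(a,b):=a\log(b/a)-b+a$ for $a,b>0$. The compensator terms $-\widetilde\pi_{\bm\theta_0}(k)\widetilde\lambda_k(\bm\theta)$ and $-\widetilde\pi_{\bm\theta_0}(k)\widetilde\mu_k(\bm\theta)$ pair with the corresponding log terms in this way, using $\widetilde\mu_1\equiv0$ and $\widetilde\lambda_N\equiv0$.

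By the elementary inequality $\log x\le x-1$, we have $\phi(a,b)\le0$, with equality if and only if $b=a$. Moreover, every weight $\widetilde\pi_{\bm\theta_0}(k)=v_{\bm\theta_0}(k)h_{\bm\theta_0}(k)$ is strictly positive by Perron--Frobenius (Assumption~\ref{ass:irred_and_stab}). Hence $\Lambda(\bm\theta)\le\Lambda(\bm\theta_0)$. Equality forces $\widetilde\lambda_k(\bm\theta)=\widetilde\lambda_k(\bm\theta_0)$ for all $k=1,\dots,N-1$ and $\widetilde\mu_k(\bm\theta)=\widetilde\mu_k(\bm\theta_0)$ for all $k=2,\dots,N$. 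Assumption~\ref{ass:identifiability} then gives $\bm\theta=\bm\theta_0$.

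\textbf{Well-separated maximum and conclusion.} Fix an open ball $B_\varepsilon$ around $\bm\theta_0$. Since $K\setminus B_\varepsilon$ is compact and $\Lambda$ is continuous with unique maximiser $\bm\theta_0$,
\[
\eta:=\Lambda(\bm\theta_0)-\sup_{K\setminus B_\varepsilon}\Lambda>0
\]
(if $K\setminus B_\varepsilon$ is empty there is nothing to prove). Let $\Delta_T:=\sup_K|T^{-1}\widetilde\ell_T-\Lambda|$. On the event $\{\widetilde{\bm\theta}_T\notin B_\varepsilon\}$, the maximising property $\widetilde\ell_T(\widetilde{\bm\theta}_T)\ge\widetilde\ell_T(\bm\theta_0)$ together with the definition of $\Delta_T$ gives
\[
\Lambda(\bm\theta_0)-\Delta_T\le T^{-1}\widetilde\ell_T(\bm\theta_0)\le T^{-1}\widetilde\ell_T(\widetilde{\bm\theta}_T)\le\Lambda(\widetilde{\bm\theta}_T)+\Delta_T\le\Lambda(\bm\theta_0)-\eta+\Delta_T,
\]
so $\Delta_T\ge\eta/2$. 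By Lemma~\ref{lem:uniform-lln-B}, $\Delta_T\to0$ almost surely under $\widetilde{\mathbb P}_{\bm\theta_0}$. Therefore $\widetilde{\mathbb P}_{\bm\theta_0}(\widetilde{\bm\theta}_T\notin B_\varepsilon)\to0$, which is consistency; in fact the argument gives almost sure convergence.

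The only delicate point is the identification step. There one must check carefully that the boundary states contribute nothing spurious (the $k=1$ death term and the $k=N$ birth term are absent), and that the positivity of the stationary weights makes every statewise equality forced. The remaining steps are routine.
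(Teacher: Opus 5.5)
Your proof is correct and follows essentially the same route as the paper. Like the paper, you combine the uniform law of large numbers from Lemma~\ref{lem:uniform-lln-B}, the pointwise inequality $x\log(y/x)-(y-x)\le 0$ together with Assumption~\ref{ass:identifiability} to make $\bm\theta_0$ the unique maximiser of $\Lambda$, and a well-separated-maximum argmax argument; your explicit checks of the continuity of $\Lambda$ and of the strict positivity of the weights $\widetilde\pi_{\bm\theta_0}(k)$ fill in points the paper uses only implicitly, and your remark that the argument in fact yields almost sure convergence is also correct.
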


\begin{proof}
By Lemma~\ref{lem:uniform-lln-B},
\[
\sup_{\bm\theta\in K}\left|
\frac1T\widetilde\ell_T(\bm \theta)-\Lambda(\bm \theta)
\right|\xrightarrow[T\to\infty]{a.s.}0.
\]

We now argue that $\Lambda$ is uniquely maximised at $\bm\theta_0$ on $K$. Using \eqref{eq:Lambda-limit}, we compute
\begin{align*}
\Lambda(\bm\theta)-\Lambda(\bm \theta_0)
&=
\sum_{k=1}^{N-1}\widetilde\pi_{\bm\theta_0}(k)
\left[
\widetilde\lambda_k(\bm \theta_0)
\log\frac{\widetilde\lambda_k(\bm \theta)}{\widetilde\lambda_k(\bm \theta_0)}
-\bigl(\widetilde\lambda_k(\bm\theta)-\widetilde\lambda_k(\bm \theta_0)\bigr)
\right]\\
&\quad+
\sum_{k=2}^{N}\widetilde\pi_{\bm\theta_0}(k)
\left[
\widetilde\mu_k(\bm\theta_0)
\log\frac{\widetilde\mu_k(\bm\theta)}{\widetilde\mu_k(\bm\theta_0)}
-\bigl(\widetilde\mu_k(\bm\theta)-\widetilde\mu_k(\bm\theta_0)\bigr)
\right].
\end{align*}
For any $x,y>0$, we have the inequality 
$x\log\frac{y}{x}-(y-x)\le 0,$
with equality if and only if $y=x$. Therefore $\Lambda(\bm\theta)\le \Lambda(\bm\theta_0)$ for all $\bm \theta\in K$. By Assumption~\ref{ass:identifiability}, equality holds only at $\bm\theta=\bm\theta_0$.

Let $\varepsilon>0$. Since $\bm\theta_0$ is the unique maximiser of $\Lambda$ on the compact set $K$, there exists $\eta_\varepsilon>0$ such that
\[
\Lambda(\bm\theta_0)-\Lambda(\bm\theta)\ge \eta_\varepsilon
\qquad\text{for all }\bm\theta\in K\text{ with }\|\bm\theta-\bm\theta_0\|\ge\varepsilon.
\]
For $T$ large, on the high probability event
\[
A_{T, \varepsilon} = \left\{\sup_{\bm\theta\in K}\left|
\frac1T\widetilde\ell_T(\bm\theta)-\Lambda(\bm\theta)
\right|<\frac{\eta_\varepsilon}{3}\right\},
\]
we have, for every $\bm\theta\in K$ with $\|\bm\theta-\bm\theta_0\|\ge\varepsilon$,
\[
\frac1T\widetilde\ell_T(\bm\theta)
\le \Lambda(\bm\theta)+\frac{\eta_\varepsilon}{3}
\le \Lambda(\bm\theta_0)-\frac{2\eta_\varepsilon}{3}
< \frac1T\widetilde\ell_T(\bm\theta_0).
\]
Hence no maximiser over $K$ can lie outside the ball $B(\bm \theta_0,\varepsilon)$. Since, as $T$ grows, the event $A_{T, \varepsilon}$
has probability tending to $1$, and since $\varepsilon$ was arbitrary it follows that
\[
\widetilde{\bm\theta}_T \xrightarrow[T\to\infty]{P}\bm\theta_0.
\]

This is the finite-state version of the standard likelihood consistency argument; compare~\cite[Chapter~VI]{ABGK1993}.
\end{proof}

\begin{remark}
Since $\bm\theta_0$ lies in the interior of $\Theta$ and the maximiser $\widetilde{\bm\theta}_T$ is consistent, it is eventually interior with probability tending to one. Hence any such interior maximiser satisfies the score equation $U_T(\bm\theta)=0$. In this sense, the likelihood maximiser and the score-based definition of the conditional MLE agree asymptotically.
\end{remark}

\begin{remark}
The function $\Lambda$ appearing in the consistency proof is not itself a likelihood. Rather, it is the deterministic limit of the normalized conditional log-likelihood under the true law:
\[
\frac{1}{T}\,\widetilde\ell_T(\bm\theta)\to \Lambda(\bm\theta),
\quad \text{under } \widetilde \P_{\bm\theta_0}.
\]
Thus $\Lambda$ should be viewed as the population contrast associated with the random criterion $\widetilde\ell_T$. The appearance of both $\bm\theta_0$ and $\bm\theta$ is therefore natural: the parameter $\bm\theta_0$ governs the law under which the data are generated, while $\bm\theta$ is the candidate value at which the likelihood is evaluated.

The role of $\Lambda$ in the argument is only to identify the limiting target of the estimator. The estimator itself remains the data-dependent maximizer $\widetilde{\bm\theta}_T$ (or, equivalently here, a solution of the score equation). Consistency follows because the random criterion $T^{-1}\widetilde\ell_T(\bm\theta)$ converges to a deterministic function $\Lambda(\bm\theta)$ having a unique maximizer at $\bm \theta_0$. In particular, $\Lambda$ is a proof device for locating the asymptotic target, not a replacement for the estimator.
\end{remark}

\begin{proposition}[Consistency of the QMLE]\label{prop:consistency-qmle-B}
Suppose that Assumptions~\ref{ass:irred_and_stab}, \ref{ass:identifiability} and \ref{ass:ident-qmle} hold. Let $K \subset U$ be a sufficiently small compact neighbourhood of $\bm\theta_0$. Assume that $\bar{\bm\theta}_T$ is a solution of the working score equation $\bar U_T(\bm \theta)=0$ taking values in $K$. Then
\[
\bar{\bm \theta}_T \xrightarrow[T\to\infty]{P} \bm\theta_0,
\quad\text{under }\widetilde \P_{\bm \theta_0}.
\]
\end{proposition}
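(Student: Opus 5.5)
The argument is the standard one for Z-estimators: establish a uniform law of large numbers for the working score, show that its limit has an isolated zero at $\bm\theta_0$, and then apply a uniqueness-of-root argument on a small compact set. The first ingredient is already available. By Lemma~\ref{lem:uniform-lln-B}, $\sup_{\bm\theta\in K}\|T^{-1}\bar U_T(\bm\theta)-\Psi(\bm\theta)\|\to0$ almost surely under $\widetilde{\mathbb P}_{\bm\theta_0}$, where $\Psi$ is given by \eqref{eq:Psi-beta}--\eqref{eq:Psi-mu}. What remains is to establish two deterministic properties of $\Psi$:
\begin{itemize}
\item[(i)] $\Psi(\bm\theta_0)=0$;
\item[(ii)] $\bm\theta_0$ is the only zero of $\Psi$ on a sufficiently small compact neighbourhood $K$.
\end{itemize}
Once these hold, a separation argument finishes the proof.

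Property (i) is immediate from the formulas, because each summand contains the factor $\widetilde\lambda_k(\bm\theta_0)-\widetilde\lambda_k(\bm\theta)$ or $\widetilde\mu_k(\bm\theta_0)-\widetilde\mu_k(\bm\theta)$.

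For (ii), I will differentiate $\Psi$ at $\bm\theta_0$. The weights $\bar g$ are multiplied by a difference that vanishes at $\bm\theta_0$, so derivatives of $\bar g$ drop out there. This leaves
\[
\partial_{\bm\theta}\Psi^{(\bm\beta)}(\bm\theta_0)=-\sum_{k=1}^{N-1}\widetilde\pi_{\bm\theta_0}(k)\,\bar g_{\bm\beta}(k;\bm\theta_0)\,\widetilde\lambda_k(\bm\theta_0)\,\nabla_{\bm\theta}\log\widetilde\lambda_k(\bm\theta_0)^\top,
\]
together with the analogous expression for the $\mu$ block. Comparing with \eqref{eq:H-beta-beta-corrected}--\eqref{eq:H-mu-mu-corrected}, this Jacobian is $-\mathcal H(\bm\theta_0)^\top$ (up to the transpose convention used in defining $\mathcal H$). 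Under Assumption~\ref{ass:ident-qmle}, $\mathcal G=\mathcal H^\top\mathcal J^{-1}\mathcal H$ is positive definite, which forces $\mathcal H(\bm\theta_0)$ to be nonsingular. The smoothness of $\Psi$ on $U$ is established at the start of Appendix~\ref{sec:proofs}: $h_{\bm\theta}$ is $C^\infty$ and the intensities are bounded away from zero. With $\Psi$ smooth and its Jacobian nonsingular at $\bm\theta_0$, the inverse function theorem makes $\Psi$ a local diffeomorphism. Shrinking $K$ then gives $\Psi(\bm\theta)\neq0$ for $\bm\theta\in K\setminus\{\bm\theta_0\}$, and more quantitatively $\|\Psi(\bm\theta)\|\ge c\|\bm\theta-\bm\theta_0\|$ on $K$. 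Assumption~\ref{ass:identifiability} is consistent with $\bm\theta_0$ being the target, but only this local injectivity is actually used.

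The conclusion is then short. Fix $\varepsilon>0$. By continuity and compactness, $\eta_\varepsilon:=\inf\{\|\Psi(\bm\theta)\|:\bm\theta\in K,\ \|\bm\theta-\bm\theta_0\|\ge\varepsilon\}>0$. Consider the event that the uniform error $\sup_{K}\|T^{-1}\bar U_T-\Psi\|$ is below $\eta_\varepsilon/2$; its probability tends to one. On that event, any $\bm\theta\in K$ with $\|\bm\theta-\bm\theta_0\|\ge\varepsilon$ satisfies $\|T^{-1}\bar U_T(\bm\theta)\|\ge\eta_\varepsilon/2>0$, so it cannot be a root. Hence every root $\bar{\bm\theta}_T\in K$ lies in $B(\bm\theta_0,\varepsilon)$, which proves $\bar{\bm\theta}_T\to\bm\theta_0$ in probability.

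The main obstacle I expect is the Jacobian computation in (ii). One has to check carefully that the product rule leaves no extra terms at $\bm\theta_0$, and that the index and transpose conventions, including the $\mu$-rows that involve $g^{(+)}_\mu$ and $g^{(-)}_{\bm\beta}$, match $\mathcal H$ as defined in \eqref{eq:H-beta-beta-corrected}--\eqref{eq:H-mu-mu-corrected}, so that nonsingularity of $\mathcal H(\bm\theta_0)$ transfers to the Jacobian. The size of the neighbourhood in ``sufficiently small $K$'' is exactly what local injectivity requires.
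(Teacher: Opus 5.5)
Your proposal is correct and follows essentially the same route as the paper: the uniform law of large numbers from Lemma~\ref{lem:uniform-lln-B}, $\Psi(\bm\theta_0)=0$, the Jacobian $D\Psi(\bm\theta_0)=-\mathcal H(\bm\theta_0)^\top$ made nonsingular by positive definiteness of $\mathcal G(\bm\theta_0)$, the inverse function theorem to isolate the zero on a shrunken $K$, and then the $\varepsilon$-separation argument on the high-probability event. Your product-rule computation with the paper's block conventions gives exactly $-\mathcal H(\bm\theta_0)^\top$, so the transpose caveat you raise is not needed (and nonsingularity would survive a transpose in any case).
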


\begin{proof}
By Lemma~\ref{lem:uniform-lln-B},
\[
\sup_{\bm\theta\in K}\left\|
\frac1T\bar U_T(\bm\theta)-\Psi(\bm\theta)
\right\|\xrightarrow[T\to\infty]{a.s.}0.
\]
Also, by \eqref{eq:Psi-beta}--\eqref{eq:Psi-mu}, we have $\Psi(\bm\theta_0)=0$.

We next identify the derivative of $\Psi$ at $\bm\theta_0$. Differentiating
\eqref{eq:Psi-beta}--\eqref{eq:Psi-mu} at $\bm\theta_0$, we obtain
\[
D\Psi(\bm\theta_0)=-\mathcal H(\bm \theta_0)^\top.
\]
Since
\[
\mathcal G(\bm \theta_0)=\mathcal H(\bm \theta_0)^\top \mathcal J(\bm \theta_0)^{-1}\mathcal H(\bm \theta_0)
\]
is well defined and positive definite by Assumption~\ref{ass:ident-qmle}, the matrix $\mathcal H(\bm\theta_0)$ must have full rank, and hence is non-singular. Therefore $D\Psi(\bm\theta_0) = -\mathcal H(\bm\theta_0)^\top$ is non-singular. By the inverse function theorem, $\bm\theta_0$ is an isolated zero of $\Psi$. Shrinking $K$ if necessary, we may assume that $\bm\theta_0$ is the unique zero of $\Psi$ on $K$.

Let $\varepsilon>0$. Since $\Psi$ is continuous and has no zero on
$\{\bm \theta\in K:\|\bm \theta- \bm\theta_0\|\ge\varepsilon\}$, compactness gives
\[
\inf\{\|\Psi(\bm \theta)\|:\bm \theta\in K,\ \|\bm \theta-\bm\theta_0\|\ge\varepsilon\}
=: c_\varepsilon >0.
\]
On the event
\[
\sup_{\bm\theta\in K}\left\|
\frac1T\bar U_T(\bm \theta)-\Psi(\bm \theta)
\right\|<\frac{c_\varepsilon}{2},
\]
every root of $\bar U_T(\bm \theta)=0$ lying in $K$ must satisfy
$\|\bm \theta-\bm \theta_0\|<\varepsilon$, since otherwise
\[
0=\left\|\frac1T\bar U_T(\bm \theta)\right\|
\ge \|\Psi(\bm \theta)\|-
\left\|\frac1T\bar U_T(\bm \theta)-\Psi(\bm\theta)\right\|
> \frac{c_\varepsilon}{2},
\]
which is a contradiction. Because the above event has probability tending to $1$ as $T$ grows, and $\varepsilon$ can be taken arbitrarily small, we conclude that
\[
\bar{\bm \theta}_T\xrightarrow[T\to\infty]{P}\bm \theta_0.
\]
This is the finite-state analogue of the standard consistency argument for estimating
functions; compare \cite[Section~VI.2]{ABGK1993}.
\end{proof}

\subsection{Proofs of the asymptotic normality}

\begin{proof}[Proof of Theorem \ref{thm:clt-mle}.]
By Proposition~\ref{prop:consistency-mle-B}, the estimator $\widetilde\theta_T$ is consistent. The conditional log-likelihood $\widetilde\ell_T(\bm\theta)$ is the log-likelihood of a finite-dimensional parametric counting-process model under the $Q$-process. On the birth side, the relevant log-intensities are $\log\widetilde\lambda_k(\bm\theta)$ for $k=1,\dots,N-1$, while on the death side they are $\log\widetilde\mu_k(\bm\theta)$ for $k=2,\dots,N$. The preceding verification shows that the required smoothness, bracket convergence, and remainder bounds hold on these index sets. Therefore, by the standard counting-process likelihood theory~\cite[Chapter VI]{ABGK1993}, the score equation $U_T(\bm\theta)=0$ admits a consistent solution $\widetilde{\bm\theta}_T$, and
\begin{equation}\label{eq:mle-clt-qprocess}
\sqrt{T}\bigl(\widetilde{\bm\theta}_T-\bm\theta_0\bigr)
\convd
\mathcal N\!\bigl(0,\mathcal I(\bm\theta_0)^{-1}\bigr)
\qquad
\text{under }\widetilde{\mathbb P}_{\bm\theta_0}.
\end{equation}
This proves Theorem \ref{thm:clt-mle}.
\end{proof}

\begin{proof}[Proof of Theorem \ref{thm:clt-qmle}]
By Proposition~\ref{prop:consistency-qmle-B}, the estimator $\bar\theta_T$ is consistent.
The working score $\bar U_T(\bm\theta)$ is an estimating function of the usual counting-process form. Its birth component uses predictable weights $\bar g_{\bm\beta}(k;\bm\theta)$ on states $k=1,\dots,N-1$, and its death component uses $\bar g_\mu(k;\bm\theta)$ on states $k=2,\dots,N$. These weights are continuous in $\bm\theta$, locally bounded, and satisfy the same finite-state regularity bounds on $U$ as above, again with the death-side quantities restricted to states $2,\dots,N$.

The general asymptotic theory for such M-estimators, see \cite[Section VI.2]{ABGK1993}, yields consistency of $\bar{\bm\theta}_T$ together with asymptotic covariance of sandwich form. In our notation, the sandwich covariance is
\be\label{eq:qme-sandwich-cov}
\mathcal H(\bm\theta_0)^{-1}
\mathcal J(\bm\theta_0)
\mathcal H(\bm\theta_0)^{-T} =
\mathcal G(\bm\theta_0)^{-1}.
\ee
It follows that
\begin{equation}\label{eq:qmle-clt-qprocess}
\sqrt{T}\bigl(\bar{\bm\theta}_T-\bm\theta_0\bigr)
\convd
\mathcal N\!\bigl(0,\mathcal G(\bm\theta_0)^{-1}\bigr) 
\qquad
\text{under }\widetilde{\mathbb P}_{\bm\theta_0}.
\end{equation}
This concludes the proof of Theorem \ref{thm:clt-qmle}.
\end{proof}

\bibliographystyle{abbrvnat}
\bibliography{main}
\end{document}